\newtheorem{theorem}{Theorem}[section]
\theoremstyle{definition}
\theoremstyle{proposition}
\newtheorem{proposition}[theorem]{Proposition}
\theoremstyle{corollary}
\newtheorem{corollary}[theorem]{Corollary}
\theoremstyle{remark}
\newtheorem{remark}[theorem]{Remark}
\numberwithin{equation}{section}
\begin{document}

\title{Approximation Errors in Truncated Dimensional Decompositions}

\author{Sharif Rahman}
\address{Applied Mathematical \& Computational Sciences, The University of Iowa,
Iowa City, Iowa 52242}
\email{sharif-rahman@uiowa.edu}
\thanks{The author was supported in part by NSF Grant \#CMMI-0969044 and \#CMMI-1130147.}

\subjclass[2000]{Primary 41A63, 41A99, 26B99, 65G99, 65C60}

\date{May 30, 2011 and, in revised form, June 7, 2012.}


\keywords{Uncertainty quantification, ANOVA, HDMR, ADD, RDD}

\begin{abstract}
The main theme of this paper is error analysis for approximations
derived from two variants of dimensional decomposition of a multivariate
function: the referential dimensional decomposition (RDD) and analysis-of-variance
dimensional decomposition (ADD). New formulae are presented for the
lower and upper bounds of the expected errors committed by bivariately
and arbitrarily truncated RDD approximations when the reference
point is selected randomly, thereby facilitating a means for weighing
RDD against ADD approximations. The formulae reveal that the expected
error from the $S$-variate RDD approximation of a function of $N$
variables, where $0\le S<N<\infty$, is at least $2^{S+1}$ times
greater than the error from the $S$-variate ADD approximation. Consequently,
ADD approximations are exceedingly more precise than RDD approximations. The
analysis also finds the RDD approximation to be sub-optimal for an
arbitrarily selected reference point, whereas the ADD approximation
always results in minimum error. Therefore, the RDD approximation
should be used with caution.
\end{abstract}

\maketitle

\section{Introduction}
Uncertainty quantification of complex systems mandates stochastic
computations of a multivariate output function $y$
that depends on $\mathbf{X}:=(X_{1},\cdots,X_{N})$, a high-dimensional random input with a positive integer $N$. For
practical applications, encountering hundreds of variables or more
is not uncommon, where a function of interest, defined algorithmically
via numerical solution of algebraic, differential, or integral equations,
is all too often expensive to evaluate. Therefore, there is a need to develop
low-dimensional approximations of $y$ by seeking to exploit the hidden
structure potentially lurking underneath a function decomposition.

The dimensional decomposition of
\begin{equation}
y(\mathbf{X})={\displaystyle \sum_{u\subseteq\{1,\cdots,N\}}y_{u}(\mathbf{X}_{u})}\label{1}
\end{equation}
can be viewed as a finite, hierarchical expansion in terms of its
input variables with increasing dimensions, where $u\subseteq\{1,\cdots,N\}$
is a subset with the complementary set $-u=\{1,\cdots,N\}\backslash u$
and cardinality $0\le|u|\le N$, and $y_{u}$ is a $|u|$-variate component function describing a constant or the cooperative influence of $\mathbf{X}_{u}=(X_{i_{1}},\cdots,X_{i_{|u|}})$, $1\leq i_{1}<\cdots<i_{|u|}\leq N$, a subvector of $\mathbf{X}$,
on $y$ when $|u|=0$ or $|u|>0$. The summation in (\ref{1}) comprises $2^{N}$ terms,
with each term depending on a group of variables indexed by a particular
subset of $\{1,\cdots,N\}$, including the empty set $\emptyset$. This decomposition, first presented by
Hoeffding \cite{hoeffding48} in relation to his seminal work on $U$-statistics,
has been studied by many other researchers \cite{owen03}: Sobol \cite{sobol69}
used it for quadrature and analysis of variance (ANOVA) \cite{sobol03}; Efron and Stein \cite{efron81}
applied it to prove their famous lemma on jackknife variances; Owen \cite{owen97}
presented a continuous space version of the nested ANOVA; Hickernell
\cite{hickernell95} developed a reproducing kernel Hilbert space
version; and Rabitz and Alis \cite{rabitz99} made further refinements,
referring to it as high-dimensional model representation (HDMR). More
recently, the author's group formulated this decomposition from the
perspective of Taylor series expansion, solving a number of stochastic-mechanics
problems \cite{rahman04,xu04,xu05}.

In a practical setting, the multivariate function $y$, fortunately,
has an effective dimension \cite{caflisch97} much lower than $N$,
meaning that $y$ can be effectively approximated by a sum of lower-dimensional
component functions $y_{u}$, $|u|\ll N$. Given an integer $0\le S <N$, the truncated dimensional decomposition
\begin{equation}
\hat{y}_{S}(\mathbf{X})={\displaystyle \sum_{{\textstyle {u\subseteq\{1,\cdots,N\}\atop 0\le|u|\le S}}}y_{u}(\mathbf{X}_{u})}\label{2}
\end{equation}
then represents a general $S$-variate approximation
of $y(\mathbf{X})$, which for $S>0$ includes cooperative effects of at
most $S$ input variables $X_{i_{1}},\cdots,X_{i_{S}}$, $1\le i_{1}<\cdots<i_{S}\le N$, on $y$.
However, for (\ref{2}) to be useful, one must ask the
fundamental question: what is the approximation error committed by
$\hat{y}_{S}(\mathbf{X})$ for a given $0\le S<N$? The answer
to this question, however, is neither simple nor unique, because there
are multiple ways to construct the component functions $y_{u}$, $0\le|u|\le S$,
spawning approximations of distinct qualities. Indeed, there exist
two important variants of the decomposition: (1) referential dimensional
decomposition (RDD) and (2) ANOVA dimensional decomposition (ADD),
both representing sums of lower-dimensional component functions of
$y(\mathbf{X})$. While ADD has desirable orthogonal properties,
the ANOVA component functions are difficult to compute due to the high-dimensional
integrals involved. In contrast, the RDD lacks orthogonal features with respect to the probability measure of $\mathbf{X}$,
but its component functions are much easier to obtain. For RDD, an additional
question arises regarding the reference point, which, if improperly
selected, can mar the approximation. Existing error analysis, limited to the univariate truncation of $y(\mathbf{X})$, reveals that the expected error
from the RDD approximation is at least four times larger than the
error from the ADD approximation \cite{wang08}. Although useful to some extent, such result alone is not adequate when evaluating multivariate functions
requiring higher-variate interactions of input \cite{rahman04,xu04,xu05}.
No error estimates exist yet in the current literature even for a
bivariate approximation. Therefore, a more general error analysis
pertaining to a general $S$-variate approximation of a multivariate
function should provide much-needed insights into the mathematical
underpinnings of dimensional decomposition.

The purpose of this paper is twofold. Firstly, a brief exposition
of ADD and RDD is given in Section 2, including clarifications of
parallel developments and synonyms used by various researchers. The
error analysis pertaining to the ADD approximation is described in
Section 3. Secondly, a direct form of the RDD approximation, previously
developed by the author's group, is tapped for providing a vital link
to subsequent error analysis. Section 4 introduces new formulae for
the lower and upper bounds of the expected errors from the bivariate
and general $S$-variate RDD approximations. These error bounds, so
far available only for the univariate approximation, are used
to clarify why ADD approximations are exceedingly more precise than
RDD approximations. There are seven new results stated or proved in this paper: Proposition \ref{p4}, Theorems \ref{t6}, \ref{t7}, and Corollaries \ref{c2}, \ref{c3}, \ref{c4a}, \ref{c4}.
Proofs of other results can be obtained from the references cited, including a longer version of this paper (\url{http://www.engineering.uiowa.edu/~rahman/moc_longpaper.pdf}).
Conclusions are drawn in Section 5.

\section{Dimensional Decomposition}

Let $\mathbb{N}$, $\mathbb{N}_0$, $\mathbb{Z}$, $\mathbb{R}$, and $\mathbb{R}_{0}^{+}$ represent the sets of positive integer (natural), non-negative integer, integer, real, and non-negative real numbers, respectively.  For $k \in \mathbb{N}$, denote by $\mathbb{R}^k$ the $k$-dimensional Euclidean space and by $\mathbb{N}_0^k$ the $k$-dimensional multi-index space. These standard notations will be used throughout the paper.

Let $(\Omega,\mathcal{F},P)$ be a complete probability space, where
$\Omega$ is a sample space, $\mathcal{F}$ is a $\sigma$-field on
$\Omega$, and $P:\mathcal{F}\to[0,1]$ is a probability measure.
With $\mathcal{B}^{N}$ representing the Borel $\sigma$-field on
$\mathbb{R}^{N}$, consider an $\mathbb{R}^{N}$-valued independent
random vector $\mathbf{X}:=(X_{1},\cdots,X_{N}):(\Omega,\mathcal{F})\to(\mathbb{R}^{N},\mathcal{B}^{N})$,
which describes the statistical uncertainties in all system and input
parameters of a given stochastic problem. The probability law of $\mathbf{X}$
is completely defined by its joint probability density function $f_{\mathbf{X}}:\mathbb{R}^{N}\to\mathbb{R}_{0}^{+}$.
Assuming independent coordinates of $\mathbf{X}$, its joint probability
density $f_{\mathbf{X}}(\mathbf{x})=\Pi_{i=1}^{i=N}f_{i}(x_{i})$
can be expressed by a product of marginal probability density functions $f_{i}$ of $X_{i}$, $i=1,\cdots,N$,
defined on the probability triple $(\Omega_{i},\mathcal{F}_{i},P_{i})$
with a bounded or an unbounded support on $\mathbb{R}$.

Consider a non-negative, multiplicative, otherwise general, weight
function $w:\mathbb{R}^{N}\to\mathbb{R}_{0}^{+}$, satisfying $w(\mathbf{x})=\prod_{i=1}^{N}w_{i}(x_{i})$
with the marginal weight functions $w_{i}:\mathbb{R}\to\mathbb{R}_{0}^{+}$, $i=1,\cdots,N$.
For $u\subseteq\{1,\cdots,N\}$, let $w_{-u}(\mathbf{x}_{-u}):=\prod_{i=1,i\notin u}^{N}w_{i}(x_{i})$
define the joint weight function associated with $-u$. Without loss
of generality, assume that the weight functions have been normalized
to integrate to $\int_{\mathbb{R}}w_{i}(x_{i})dx_{i}=\int_{\mathbb{R}^{N-|u|}}w_{-u}(\mathbf{x}_{-u})d\mathbf{x}_{-u}=\int_{\mathbb{R}^{N}}w(\mathbf{x})d\mathbf{x}=1.$
Let $y(\mathbf{X}):=y(X_{1},\cdots,X_{N}$), a real-valued, measurable
transformation on $(\Omega,\mathcal{F})$, define a stochastic response
of interest and $\mathcal{L}_{2}(\Omega,\mathcal{F},P)$ represent a Hilbert
space of square-integrable functions $y$ with respect to the induced
generic measure $w(\mathbf{x})d\mathbf{x}$ supported on $\mathbb{R}^{N}$.
The representation in (\ref{1}) is called dimensional decomposition
if the component functions $y_{u}$, $u\subseteq\{1,\cdots,N\}$, are uniquely
determined from the requirements
\begin{equation}
\int_{\mathbb{R}}y_{u}(\mathbf{x}_{u})w_{i}(x_{i})dx_{i}=0\;\mathrm{for}\; i\in u.\label{3}
\end{equation}
Indeed, integrating (\ref{1}) with respect to the measure
$w_{-u}(\mathbf{x}_{-u})d\mathbf{x}_{-u}$, that is, over all variables except $\mathbf{x}_{u}$,
and using (\ref{3}) yields a recursive form

\begin{equation}
\begin{split}
y(\mathbf{X}) & =  {\displaystyle \sum_{u\subseteq\{1,\cdots,N\}}y_{u}(\mathbf{X}_{u})},\\
y_{\emptyset} & =  \int_{\mathbb{R}^{N}}y(\mathbf{x})w(\mathbf{x})d\mathbf{x},\\
y_{u}(\mathbf{X}_{u}) & =  {\displaystyle \int_{\mathbb{R}^{N-|u|}}y(\mathbf{X}_{u},\mathbf{x}_{-u})}w_{-u}(\mathbf{x}_{-u})d\mathbf{x}_{-u}-{\displaystyle \sum_{v\subset u}}y_{v}(\mathbf{X}_{v}),
\end{split}
\label{3b}
\end{equation}
of the decomposition with $(\mathbf{X}_{u},\mathbf{x}_{-u})$
denoting an $N$-dimensional vector whose $i$th component is $X_{i}$
if $i\in u$ and $x_{i}$ if $i\notin u.$ When $u=\emptyset$, the sum in the last line of (\ref{3b}) vanishes, resulting in the expression of the constant function $y_{\emptyset}$ in the second line. When $u=\{1,\cdots,N\}$, the integration in the last line of (\ref{3b}) is on the empty set, reproducing (\ref{1}) and hence finding the last function $y_{\{1,\cdots,N\}}$. Indeed, all component functions of $y$ can be obtained by interpreting literally the last line of (\ref{3b}). On inversion, (\ref{3b}) results in
\begin{equation}
y(\mathbf{X})={\displaystyle \sum_{u\subseteq\{1,\cdots,N\}}\;\sum_{v\subseteq u}(-1)^{|u|-|v|}{\displaystyle \int_{\mathbb{R}^{N-|v|}}y(\mathbf{X}_{v},\mathbf{x}_{-v})}w_{-v}(\mathbf{x}_{-v})d\mathbf{x}_{-v}},
\label{3c}
\end{equation}
providing an explicit form of the same decomposition.

It is important to emphasize that the measure involved in expressing
the dimensional decomposition in (\ref{3b}) or (\ref{3c}) may
or may not represent the probability measure of $\mathbf{X}$.
Indeed, different measures will create distinct yet exact representations
of $y$, all exhibiting the same structure of (\ref{1}). There
exist two important variants of dimensional decomposition, described
as follows.

\subsection{ADD}

The ADD is generated by selecting the probability measure of $\mathbf{X}$
as the generic measure, that is, $w(\mathbf{x})d\mathbf{x}$=$f_{\mathbf{X}}(\mathbf{x})d\mathbf{x}$
in (\ref{3b}) or (\ref{3c}), yielding the recursive form
\begin{subequations}
\begin{align}
y(\mathbf{X}) & =  {\displaystyle \sum_{u\subseteq\{1,\cdots,N\}}y_{u,A}(\mathbf{X}_{u})}, \label{4a}\\
y_{\emptyset,A} & =  \int_{\mathbb{R}^{N}}y(\mathbf{x})f_{\mathbf{X}}(\mathbf{x})d\mathbf{x},\\
y_{u,A}(\mathbf{X}_{u}) & =  {\displaystyle \int_{\mathbb{R}^{N-|u|}}y(\mathbf{X}_{u},\mathbf{x}_{-u})}f_{-u}(\mathbf{x}_{-u})d\mathbf{x}_{-u}-{\displaystyle \sum_{v\subset u}}y_{v,A}(\mathbf{X}_{v}),
\end{align}
\label{4}
\end{subequations}
\!\!that is commonly found in the ANOVA literature \cite{sobol03,efron81},
although, for the uniform probability measure $d\mathbf{x}$. The explicit version takes the form
\begin{equation}
y(\mathbf{X})={\displaystyle \sum_{u\subseteq\{1,\cdots,N\}}\;\sum_{v\subseteq u}(-1)^{|u|-|v|}{\displaystyle \int_{\mathbb{R}^{N-|v|}}y(\mathbf{X}_{v},\mathbf{x}_{-v})}f_{-v}(\mathbf{x}_{-v})d\mathbf{x}_{-v}},\label{5}
\end{equation}
where $f_{-v}(\mathbf{x}_{-v})=\prod_{i=1,i\notin v}^{N}f_{i}(x_{i})$
is the marginal probability density function of $\mathbf{X}_{-v}:=\mathbf{X}_{\{1,\cdots,N\}\backslash v}$.
Equations (\ref{4}) and (\ref{5}) can also be derived from other perspectives,
including commuting projections on a linear space of real-valued functions
invoked by Kuo \emph{et al.} \cite{kuo10} for the uniform probability
measure.

If $\mathbb{E}$ is the expectation operator with respect to the measure $f_{\mathbf{X}}(\mathbf{x})d\mathbf{x}$,
then two important properties of the ADD component functions, inherited
from (\ref{3}), are as follows.\\

\begin{proposition}
The ADD component functions $y_{u,A}$, $\emptyset\ne u\subseteq\{1,\cdots,N\}$, have zero means, i.e.,
\begin{equation*}
\mathbb{E}\left[y_{u,A}(\mathbf{X}_{u})\right]=\int_{\mathbb{R}^{|u|}}y_{u,A}(\mathbf{x}_{u})f_{u}(\mathbf{x}_{u})d\mathbf{x}_{u}=0.\label{6}
\end{equation*}
\label{p1}
\end{proposition}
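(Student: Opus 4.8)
The plan is to obtain the claim as an immediate consequence of the defining vanishing condition (\ref{3}), specialized to the probability measure, together with the product form of $f_u$. First I would note that the ADD is precisely the dimensional decomposition (\ref{3b}) generated by the choice $w(\mathbf{x})\,d\mathbf{x}=f_{\mathbf{X}}(\mathbf{x})\,d\mathbf{x}$, so that $w_i=f_i$ and the components $y_{u,A}$ inherit (\ref{3}) in the form
\[
\int_{\mathbb{R}} y_{u,A}(\mathbf{x}_u)\, f_i(x_i)\, dx_i = 0, \qquad i\in u .
\]

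Next, since $u\neq\emptyset$, I would fix one index $i\in u$. By the assumed independence of the coordinates of $\mathbf{X}$, the marginal density of $\mathbf{X}_u$ factorizes as $f_u(\mathbf{x}_u)=\prod_{j\in u}f_j(x_j)=f_i(x_i)\,f_{u\setminus\{i\}}(\mathbf{x}_{u\setminus\{i\}})$. I would then apply the Fubini--Tonelli theorem to the $|u|$-fold integral defining $\mathbb{E}[y_{u,A}(\mathbf{X}_u)]$, performing the $x_i$-integration innermost:
\[
\int_{\mathbb{R}^{|u|}} y_{u,A}(\mathbf{x}_u)\, f_u(\mathbf{x}_u)\, d\mathbf{x}_u
= \int_{\mathbb{R}^{|u|-1}} \left( \int_{\mathbb{R}} y_{u,A}(\mathbf{x}_u)\, f_i(x_i)\, dx_i \right) f_{u\setminus\{i\}}(\mathbf{x}_{u\setminus\{i\}})\, d\mathbf{x}_{u\setminus\{i\}} .
\]
The bracketed inner integral is identically zero by the inherited condition displayed above, hence the whole expression vanishes; the two equalities in the statement of the proposition coincide because $y_{u,A}(\mathbf{X}_u)$ depends only on $\mathbf{X}_u$, whose law has density $f_u$.

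The one step that requires justification is the use of Fubini--Tonelli, i.e., that $y_{u,A}(\mathbf{x}_u)\,f_u(\mathbf{x}_u)$ is integrable over $\mathbb{R}^{|u|}$. This I would derive from the standing hypothesis $y\in\mathcal{L}_2(\Omega,\mathcal{F},P)$: the recursion (\ref{4}) expresses each $y_{u,A}$ through iterated conditional expectations of $y$ taken with respect to $f_{\mathbf{X}}(\mathbf{x})\,d\mathbf{x}$, which are bounded operations on $\mathcal{L}_2$, so $y_{u,A}\in\mathcal{L}_2(\mathbb{R}^{|u|},f_u\,d\mathbf{x}_u)\subset\mathcal{L}_1(\mathbb{R}^{|u|},f_u\,d\mathbf{x}_u)$ because $f_u\,d\mathbf{x}_u$ is a probability measure. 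With this integrability in hand the interchange is legitimate and the proof closes. I do not anticipate any genuine obstacle beyond this routine measure-theoretic bookkeeping; the substance of the result is entirely contained in the orthogonality-type relation (\ref{3}).
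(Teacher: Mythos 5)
Your proof is correct and follows exactly the route the paper intends: the paper presents Proposition \ref{p1} as ``inherited from (\ref{3})'', i.e., specialize the vanishing condition to $w_i=f_i$, pick any $i\in u$, and integrate out the remaining coordinates using the product form of $f_u$. Your additional remarks on Fubini and $\mathcal{L}_2\subset\mathcal{L}_1$ integrability are sound routine bookkeeping that the paper leaves implicit.
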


\begin{proposition}
Two distinct ADD component functions $y_{u,A}$ and $y_{v,A}$, where $\emptyset\ne u\subseteq\{1,\cdots,N\}$, $\emptyset\ne v\subseteq\{1,\cdots,N\}$,
and $u\neq v$, are orthogonal, i.e., they satisfy the property,
\begin{equation*}
\mathbb{E}\left[y_{u,A}(\mathbf{X}_{u})y_{v,A}(\mathbf{X}_{v})\right]=\int_{\mathbb{R}^{|u\cup v|}}y_{u,A}(\mathbf{x}_{u})y_{v,A}(\mathbf{x}_{v})f_{u\cup v}(\mathbf{x}_{u\cup v})d\mathbf{x}_{u\cup v}=0.\label{7}
\end{equation*}
\label{p2}
\end{proposition}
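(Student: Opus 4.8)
The plan is to leverage the vanishing-moment conditions (\ref{3}), which for the ADD (obtained by setting $w_i=f_i$) read $\int_{\mathbb{R}}y_{u,A}(\mathbf{x}_u)f_i(x_i)\,dx_i=0$ for every $i\in u$, together with the elementary fact that $y_{u,A}$ depends on the coordinates indexed by $u$ only. I would begin with a reduction: since $u\neq v$, at least one of the sets $u\setminus v$ and $v\setminus u$ is nonempty; after possibly interchanging the names of $u$ and $v$, I may fix an index $j\in u\setminus v$, so that the first factor below will depend on $x_j$ while the second will not.

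I would then rewrite the expectation on the left of the claimed identity as the stated integral over $\mathbb{R}^{|u\cup v|}$ against the marginal density $f_{u\cup v}(\mathbf{x}_{u\cup v})=\prod_{i\in u\cup v}f_i(x_i)$; this is legitimate because the coordinates of $\mathbf{X}$ are independent and the integrand $y_{u,A}(\mathbf{x}_u)y_{v,A}(\mathbf{x}_v)$ depends on $\mathbf{x}$ only through $\mathbf{x}_{u\cup v}$. Since $y\in\mathcal{L}_2$ and the maps in (\ref{4}) defining $y_{u,A}$ are built from conditional expectations (contractions on $\mathcal{L}_2$) and finite sums, each $y_{u,A}$ lies in $\mathcal{L}_2$; hence by the Cauchy--Schwarz inequality the product $y_{u,A}(\mathbf{X}_u)y_{v,A}(\mathbf{X}_v)$ is integrable and Fubini's theorem applies.

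Finally I would carry out the $x_j$-integration first. Because $j\notin v$, the factor $y_{v,A}(\mathbf{x}_v)$ is constant in $x_j$ and can be pulled outside the inner integral, which then reduces to $\int_{\mathbb{R}}y_{u,A}(\mathbf{x}_u)f_j(x_j)\,dx_j=0$ by (\ref{3}). Thus the integrand vanishes after integrating out $x_j$, and integrating over the remaining coordinates $\mathbf{x}_{(u\cup v)\setminus\{j\}}$ gives zero, which is precisely the asserted orthogonality. The only genuinely delicate points are the product-measure bookkeeping that lets one single out the $x_j$-integration and the integrability check underpinning Fubini; once the index $j$ has been isolated, the computation is immediate. (The same one-variable argument, applied to any $j\in u$, also yields Proposition \ref{p1}, which is in effect the degenerate case $v=\emptyset$ with the constant $y_{\emptyset,A}$ factored out.)
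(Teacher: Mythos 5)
Your proof is correct and follows exactly the route the paper indicates: the orthogonality is ``inherited from (\ref{3})'', i.e., one isolates an index $j\in u\setminus v$ (possible since $u\neq v$), integrates out $x_j$ first using the annihilation condition $\int_{\mathbb{R}}y_{u,A}(\mathbf{x}_u)f_j(x_j)\,dx_j=0$, and concludes that the full integral vanishes. The paper does not spell this argument out (it defers to the cited references), but your write-up, including the Cauchy--Schwarz/Fubini justification, is a faithful and complete version of the same approach.
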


Traditionally, (\ref{4}) or (\ref{5}) with $X_{j}$, $j=1,\cdots,N$,
following independent, standard uniform distributions, has been identified
as the ANOVA decomposition \cite{sobol03}; however, the author's
recent works \cite{rahman08,rahman09} reveal no fundamental requirement
for a specific probability measure of $\mathbf{X}$, provided
that the resultant integrals in (\ref{4}) or (\ref{5}) exist
and are finite. In this work, the ADD should be interpreted with respect
to an arbitrary but product type probability measure for which it
is always endowed with desirable orthogonal properties. However, the
ADD component functions are difficult to ascertain, because they require
calculation of high-dimensional integrals.

\subsection{RDD}

Consider a reference point $\mathbf{c}=(c_{1},\cdots,c_{N})\in\mathbb{R}^{N}$
and the associated Dirac measure $\prod_{i=1}^{N}\delta(x_{i}-c_{i})dx_{i}$.
The RDD is created when $\prod_{i=1}^{N}\delta(x_{i}-c_{i})dx_{i}$
is chosen as the generic measure in (\ref{3b}), leading to
the recursive form
\begin{subequations}
\begin{align}
y(\mathbf{X}) & =  {\displaystyle \sum_{u\subseteq\{1,\cdots,N\}}y_{u,R}(\mathbf{X}_{u};\mathbf{c})},\label{8a}\\
y_{\emptyset,R} & =  y(\mathbf{c}),\label{8b}\\
y_{u,R}(\mathbf{X}_{u};\mathbf{c}) & =  y(\mathbf{X}_{u},\mathbf{c}_{-u})-{\displaystyle \sum_{v\subset u}}y_{v,R}(\mathbf{X}_{v};\mathbf{c}),\label{8c}
\end{align}
\label{8}
\end{subequations}
\!\!presented as cut-HDMR \cite{rabitz99}, anchored decomposition \cite{wang08,kuo10,hickernell04},
and anchored-ANOVA decomposition \cite{griebel10}, with the latter
two referring to the reference point as the anchor. Xu and Rahman
introduced (\ref{8}) with the aid of Taylor series expansion,
calling it dimension-reduction \cite{xu04} and decomposition \cite{xu05}
methods for calculating statistical moments and reliability of mechanical system responses, respectively.
Again, these various synonyms of the same decomposition exist due
to diverse perspectives employed by researchers in disparate fields.
Analogous to ADD, the RDD can also be described explicitly, for instance \cite{wang08,kuo10,hickernell04},
\begin{equation}
y(\mathbf{X})={\displaystyle \sum_{u\subseteq\{1,\cdots,N\}}\;\sum_{v\subseteq u}(-1)^{|u|-|v|}y(\mathbf{X}_{v},\mathbf{c}_{-v})},\label{9}
\end{equation}
where $(\mathbf{X}_{v},\mathbf{c}_{-v})$ denotes an $N$-dimensional
vector whose $i$th component is $X_{i}$ if $i\in v$ and $c_{i}$
if $i\notin v.$ The second argument ``$\mathbf{c}$'' introduced
in $y_{u,R}$ and $y_{v,R}$ is a reminder that the RDD component
functions depend on the reference point, although $y$ does not, as (\ref{9}) is exact.

An important property of the RDD component functions, also inherited
from (\ref{3}), is as follows \cite{kuo10}.

\begin{proposition}
The RDD component functions $y_{u,R}$,
$\emptyset\ne u\subseteq\{1,\cdots,N\}$, vanish when any of its own
variables $X_{i}$ with $i\in u$ takes on the value of $c_{i}$,
i.e.,
\begin{equation*}
y_{u,R}(\mathbf{X}_{u};\mathbf{c})=0\;\mathrm{whenever\;}X_{i}=c_{i}\;\mathrm{for\; all\;}i\in u.\label{10}
\end{equation*}
\label{p3}
\end{proposition}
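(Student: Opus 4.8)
The plan is to prove the displayed identity by induction on the cardinality $|u|$, working directly from the recursive construction (\ref{8b})--(\ref{8c}) of the RDD component functions. In fact I would establish the slightly stronger statement that $y_{u,R}(\mathbf{X}_u;\mathbf{c})$ vanishes as soon as a \emph{single} coordinate $X_j$ with $j\in u$ equals $c_j$; the asserted case, in which $X_i=c_i$ for all $i\in u$, then follows at once. The base case $|u|=1$, say $u=\{j\}$, is immediate: by (\ref{8c}) and (\ref{8b}) one has $y_{\{j\},R}(X_j;\mathbf{c})=y(X_j,\mathbf{c}_{-\{j\}})-y(\mathbf{c})$, which reduces to $y(\mathbf{c})-y(\mathbf{c})=0$ at $X_j=c_j$.

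For the inductive step, fix $j\in u$, assume the claim for every nonempty proper subset of $u$, and impose $X_j=c_j$. In (\ref{8c}) I would split $\sum_{v\subset u}y_{v,R}(\mathbf{X}_v;\mathbf{c})$ according to whether $j\in v$ or $j\notin v$. Every $v\subset u$ with $j\in v$ is a nonempty proper subset of $u$, so the induction hypothesis kills those terms once $X_j=c_j$, leaving only the terms with $v\subseteq u\setminus\{j\}$. Simultaneously, restricting the leading term $y(\mathbf{X}_u,\mathbf{c}_{-u})$ to $X_j=c_j$ turns it into $y(\mathbf{X}_{u\setminus\{j\}},\mathbf{c}_{-(u\setminus\{j\})})$, since the two $N$-vectors then agree in every coordinate. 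Hence, at $X_j=c_j$,
\begin{equation*}
y_{u,R}(\mathbf{X}_u;\mathbf{c})=y(\mathbf{X}_{u\setminus\{j\}},\mathbf{c}_{-(u\setminus\{j\})})-\sum_{v\subseteq u\setminus\{j\}}y_{v,R}(\mathbf{X}_v;\mathbf{c}).
\end{equation*}
But (\ref{8c}) applied to the index set $u\setminus\{j\}$, rearranged, reads precisely $y(\mathbf{X}_{u\setminus\{j\}},\mathbf{c}_{-(u\setminus\{j\})})=\sum_{v\subseteq u\setminus\{j\}}y_{v,R}(\mathbf{X}_v;\mathbf{c})$, so the right-hand side cancels to $0$, closing the induction.

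An alternative, essentially equivalent route I could take is to first Möbius-invert (\ref{8c}) on the Boolean lattice to obtain the closed form $y_{u,R}(\mathbf{X}_u;\mathbf{c})=\sum_{v\subseteq u}(-1)^{|u|-|v|}y(\mathbf{X}_v,\mathbf{c}_{-v})$, consistent with (\ref{9}), and then for fixed $j\in u$ with $X_j=c_j$ pair each $v\subseteq u\setminus\{j\}$ with $v\cup\{j\}$: these carry opposite signs $(-1)^{|u|-|v|}$ and $(-1)^{|u|-|v|-1}$, while the associated evaluations of $y$ coincide because setting $X_j=c_j$ makes $(\mathbf{X}_{v\cup\{j\}},\mathbf{c}_{-(v\cup\{j\})})$ and $(\mathbf{X}_v,\mathbf{c}_{-v})$ identical, so the alternating sum telescopes to zero. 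In either version there is no genuine analytic difficulty; the one point that needs to be stated carefully is the bookkeeping fact that evaluating an ``anchored'' argument at a coordinate that already equals its anchor value simply deletes that index from the active set without changing the function value, and this is exactly what drives both the cancellation in the recursive proof and the sign-reversing pairing in the explicit proof.
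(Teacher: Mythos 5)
Your proof is correct, and in fact establishes the stronger (and standard) form of the property --- vanishing as soon as a \emph{single} active coordinate hits its anchor value --- which subsumes the weaker ``for all $i\in u$'' version displayed in the proposition. The paper itself offers no proof: it states that the property is ``inherited from (\ref{3})'' and cites Kuo \emph{et al.}, the point being that specializing the annihilation condition $\int_{\mathbb{R}}y_{u}(\mathbf{x}_{u})w_{i}(x_{i})\,dx_{i}=0$ to the Dirac measure $w_{i}(x_{i})\,dx_{i}=\delta(x_{i}-c_{i})\,dx_{i}$ literally reads $y_{u,R}(\mathbf{x}_{u};\mathbf{c})\big|_{x_{i}=c_{i}}=0$ for each $i\in u$, so the statement is essentially the defining constraint restated. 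Your two arguments instead verify directly that the recursively constructed functions in (\ref{8b})--(\ref{8c}) (equivalently, the closed form consistent with (\ref{9})) actually satisfy this constraint, which is a genuinely different and more self-contained route: the inductive cancellation against the recursion applied to $u\setminus\{j\}$ is airtight (the terms $v\subset u$ with $j\in v$ are killed by the induction hypothesis, the terms with $j\notin v$ are exactly the subsets of $u\setminus\{j\}$, and the anchored leading term collapses correctly), and the sign-reversing pairing on the Boolean lattice is the cleanest version of the same computation. What the paper's tacit argument buys is brevity, at the cost of presupposing that the recursion (\ref{8c}) produces the unique solution of (\ref{3}); what yours buys is an unconditional verification from the construction itself. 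No gaps.
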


Clearly, the RDD component functions lack orthogonal features with
respect to the probability measure of $\mathbf{X}$, but are relatively
easy to obtain as they only involve function evaluations at a chosen
reference point.  However, the RDD component functions can be orthogonal with respect to
other inner products \cite{kuo10}.

\subsection{Truncation}

When a dimensional decomposition, whether ADD or RDD or others, of
a multivariate function is truncated by retaining only lower-dimensional
terms, the result is an approximation. However, due to the special
structure of the decomposition, the approximation is endowed with
an error-minimizing property, described in Theorem \ref{t1}.

\begin{theorem} [\cite{rabitz99}]
For a multivariate function $y:\mathbb{R}^{N}\to\mathbb{R}$ and $0\le S<N$, if
\begin{equation}
\begin{split}
\hat{y}_{S}(\mathbf{X}) & =  {\displaystyle \sum_{{\textstyle {u\subseteq\{1,\cdots,N\}\atop 0\le|u|\le S}}}y_{u}(\mathbf{X}_{u})},\\
y_{\emptyset} & =  \int_{\mathbb{R}^{N}}y(\mathbf{x})w(\mathbf{x})d\mathbf{x},\\
y_{u}(\mathbf{X}_{u}) & =  {\displaystyle \int_{\mathbb{R}^{N-|u|}}y(\mathbf{X}_{u},\mathbf{x}_{-u})}w_{-u}(\mathbf{x}_{-u})d\mathbf{x}_{-u}-{\displaystyle \sum_{v\subset u}}y_{v}(\mathbf{X}_{v}),
\end{split}\label{10b}
\end{equation}
represents an $S$-variate approximation, obtained by truncating
at $0\le|u|\le S$ the dimensional decomposition of $y(\mathbf{X})$
with respect to the generic measure $w(\mathbf{x})d\mathbf{x}$,
then its component functions $y_{u}$, $0\le|u|\le S$, are
uniquely determined from
\begin{equation}
\begin{split}
{\displaystyle \min_{y_{u},0\le|u|\le S}} & \int_{\mathbb{R}^{N}}\biggl[y(\mathbf{x})-{\displaystyle \sum_{{\textstyle {u\subseteq\{1,\cdots,N\}\atop 0\le|u|\le S}}}}y_{u}(\mathbf{x}_{u})\biggr]^{2}w(\mathbf{x})d\mathbf{x},\\
\mathrm{subject\: to} & \int_{\mathbb{R}}y_{u}(\mathbf{x}_{u})w_{i}(x_{i})dx_{i}=0\;\mathrm{for}\; i\in u.\end{split}\label{10c}
\end{equation}
\label{t1}
\end{theorem}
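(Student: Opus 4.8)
The plan is to read the recursion in (\ref{10b}) as the computation of an orthogonal projection in the Hilbert space $\mathcal{L}_2(\Omega,\mathcal{F},P)$ equipped with the inner product $\langle g,h\rangle:=\int_{\mathbb{R}^N}g(\mathbf{x})h(\mathbf{x})w(\mathbf{x})\,d\mathbf{x}$ and induced norm $\|\cdot\|$, and then to invoke the elementary fact that an orthogonal projection is the unique closest point in the subspace onto which one projects. For $u\subseteq\{1,\cdots,N\}$ introduce the subspace
\[
\mathcal{V}_u:=\Bigl\{\,g_u\in\mathcal{L}_2\ :\ g_u \text{ depends only on } \mathbf{x}_u \text{ and } \int_{\mathbb{R}}g_u(\mathbf{x}_u)\,w_i(x_i)\,dx_i=0 \text{ for every } i\in u\,\Bigr\},
\]
so that the feasible set of the minimization (\ref{10c}) is exactly $\mathcal{V}_S:=\sum_{0\le|u|\le S}\mathcal{V}_u$. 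First I would record two facts about the functions produced by (\ref{10b}). First, they are feasible, $y_u\in\mathcal{V}_u$: this is the converse of the passage from (\ref{3}) to (\ref{3b}) sketched after (\ref{3}), proved by a short induction on $|u|$ in which, for $i\in u$, integrating the last line of (\ref{10b}) over $x_i$ replaces every summand by its $(u\setminus\{i\})$-marginal (using $\int_{\mathbb{R}}w_i=1$ and the inductive hypothesis on the $y_v$, $v\subset u$), leaving a telescoping cancellation. Second, by (\ref{3b}) -- the untruncated decomposition, which is exact for any $y\in\mathcal{L}_2$ since the sum over all $u$ reproduces $y$ as noted after (\ref{3b}) -- one has $y=\hat{y}_S+r_S$ with $\hat{y}_S=\sum_{0\le|u|\le S}y_u\in\mathcal{V}_S$ and $r_S=\sum_{S<|u|\le N}y_u$.

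The decisive step is the orthogonality lemma: if $g_u\in\mathcal{V}_u$ and $g_v\in\mathcal{V}_v$ with $u\ne v$, then $\langle g_u,g_v\rangle=0$. Since $u\ne v$, some index belongs to exactly one of $u,v$; by symmetry take $i\in u\setminus v$. As $w=\prod_j w_j$ and $g_v$ is independent of $x_i$, Fubini's theorem allows the $x_i$-integration to be carried out first, whereupon the integrand of $\langle g_u,g_v\rangle$ acquires the inner factor $\int_{\mathbb{R}}g_u(\mathbf{x}_u)\,w_i(x_i)\,dx_i$, which vanishes identically by $g_u\in\mathcal{V}_u$; hence $\langle g_u,g_v\rangle=0$. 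This is precisely the computation underlying Propositions \ref{p1} and \ref{p2}, performed for the generic product measure $w(\mathbf{x})\,d\mathbf{x}$ in place of $f_{\mathbf{X}}(\mathbf{x})\,d\mathbf{x}$. Two consequences follow: the sum $\mathcal{V}_S=\sum_{0\le|u|\le S}\mathcal{V}_u$ is an orthogonal direct sum, so every element of $\mathcal{V}_S$ has a unique representation $\sum_{0\le|u|\le S}g_u$ with $g_u\in\mathcal{V}_u$; and, since every index set occurring in $r_S$ has cardinality exceeding $S$ and hence differs from every index set of size at most $S$, the lemma gives $r_S\perp\mathcal{V}_S$.

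The argument then closes in one application of Pythagoras. For an arbitrary feasible $g=\sum_{0\le|u|\le S}g_u\in\mathcal{V}_S$ in (\ref{10c}), writing $y-g=r_S+(\hat{y}_S-g)$ with $\hat{y}_S-g\in\mathcal{V}_S\perp r_S$ yields
\[
\int_{\mathbb{R}^N}\bigl[y(\mathbf{x})-g(\mathbf{x})\bigr]^2w(\mathbf{x})\,d\mathbf{x}=\|r_S\|^2+\|\hat{y}_S-g\|^2\ \ge\ \|r_S\|^2=\int_{\mathbb{R}^N}\bigl[y(\mathbf{x})-\hat{y}_S(\mathbf{x})\bigr]^2w(\mathbf{x})\,d\mathbf{x},
\]
so $\hat{y}_S$ attains the minimum; moreover equality forces $\|\hat{y}_S-g\|=0$, hence $\hat{y}_S=g$ in $\mathcal{L}_2$, and then the uniqueness of the orthogonal-direct-sum representation forces $g_u=y_u$ for every $u$ with $0\le|u|\le S$, which is the asserted uniqueness of the component functions. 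No separate existence or closedness argument for $\mathcal{V}_S$ is needed, since a concrete minimizer has been exhibited.

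The only genuine content is the orthogonality lemma, and even that is routine given the product form of $w$ and the annihilation constraints (\ref{3}); the main point requiring care is the bookkeeping -- verifying that (\ref{10b}) really delivers feasible $y_u$, and noting that the hypothesis ``$u\ne v$'' that the lemma needs is automatic here because truncation splits index sets through the inequality $|u|\le S<|v|$.
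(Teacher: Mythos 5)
Your argument is correct, and it is worth noting at the outset that the paper itself supplies no proof of Theorem \ref{t1}: the result is imported from Rabitz and Alis \cite{rabitz99}, with the reader referred to pages 202--207 of that reference for a considerably longer development. Your route is the standard Hilbert-space one, and all three of its ingredients check out: the inductive verification that the recursion (\ref{10b}) produces components satisfying the annihilation conditions (\ref{3}); the orthogonality of distinct constrained subspaces $\mathcal{V}_u\perp\mathcal{V}_v$ under the product measure $w(\mathbf{x})\,d\mathbf{x}$, obtained by integrating out a coordinate indexed by the symmetric difference; and the Pythagoras identity that exhibits $\hat{y}_S$ as the orthogonal projection of $y$ onto $\mathcal{V}_S$, hence the unique minimizer, with component-wise uniqueness following from the orthogonal direct-sum structure of $\mathcal{V}_S$. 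This is in fact the very mechanism the paper does use later, in the proof of Proposition \ref{p4}, to establish mean-square optimality of the ADD approximation for the particular measure $f_{\mathbf{X}}(\mathbf{x})\,d\mathbf{x}$; you have carried it out for a generic product weight and supplied the two steps that proof takes for granted, namely feasibility of the recursively defined components and uniqueness of the individual $y_u$, so your write-up is arguably more self-contained than anything in the paper on this point. The one place I would tighten the wording is the feasibility induction: integrating a summand $y_v$, $v\subset u$, against $w_i(x_i)\,dx_i$ gives zero when $i\in v$ (inductive hypothesis) and returns $y_v$ unchanged when $i\notin v$ (normalization of $w_i$), so the surviving sum over $v\subseteq u\setminus\{i\}$ equals the $(u\setminus\{i\})$-marginal of $y$ by the recursion at level $u\setminus\{i\}$ and cancels the integrated leading term; saying that ``every summand is replaced by its $(u\setminus\{i\})$-marginal'' blurs this case split, although the telescoping cancellation you invoke is exactly the right one.
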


Rabitz and Alis \cite{rabitz99} proved this theorem in pages 202-207 of their paper. The $\mathcal{L}_{2}$ error in (\ref{10c}) committed by a truncated dimensional decomposition is minimized, but only for a specific measure. Given a measure, no other choices of the component functions of $\hat{y}_{S}$
will produce approximations that are better than the one derived from (\ref{10b}). However, different measures will create different
truncated decompositions, resulting in distinct approximation errors.
Therefore, selecting a measure is vitally important for determining
the approximation quality of a dimensional decomposition.

\section{ADD Approximation}

\subsection{Second-Moment Properties}

The $S$-variate ADD approximation $\hat{y}_{S,A}(\mathbf{X})$,
say, of $y(\mathbf{X})$, where $0\le S<N$, is obtained by truncating
the right side of (\ref{4a}) at $0\le|u|\le S$,
yielding
\begin{equation}
\hat{y}_{S,A}(\mathbf{X})={\displaystyle \sum_{{\textstyle {u\subseteq\{1,\cdots,N\}\atop 0\le|u|\le S}}}y_{u,A}(\mathbf{X}_{u})}.\label{11}
\end{equation}
Applying the expectation operator on $y(\mathbf{X})$ and $\hat{y}_{S,A}(\mathbf{X})$
from (\ref{4a}) and (\ref{11}), respectively,
and noting Proposition \ref{p1}, the mean $\mathbb{E}\left[\hat{y}_{S,A}(\mathbf{X})\right]=y_{\emptyset,A}$
of the $S$-variate ADD approximation matches the exact mean $\mathbb{E}\left[y(\mathbf{X})\right]:=\int_{\mathbb{R}^{N}}y(\mathbf{x})f_{\mathbf{X}}(\mathbf{x})d\mathbf{x}=y_{\emptyset,A}$,
regardless of $S$. Applying the expectation operator again, this
time on $\left(\hat{y}_{S,A}(\mathbf{X})-y_{\emptyset,A}\right)^{2}$,
and recognizing Proposition \ref{p2} results in splitting the variance
\begin{equation}
\hat{\sigma}_{S,A}^{2}:=\mathbb{E}\left[\left(\hat{y}_{S,A}(\mathbf{X})-y_{\emptyset,A}\right)^{2}\right]=\sum_{{\textstyle {\emptyset\ne u\subseteq\{1,\cdots,N\}\atop 1\le|u|\le S}}}\sigma_{u}^{2}=\sum_{s=1}^{S}\:\sum_{{\textstyle {\emptyset\ne u\subseteq\{1,\cdots,N\}\atop |u|=s}}}\sigma_{u}^{2}\label{12}
\end{equation}
of the $S$-variate ADD approximation, where $\sigma_{u}^{2}:=\mathbb{E}\left[y_{u,A}^{2}(\mathbf{X}_{u})\right]$
represents the variance of the \emph{zero-}mean ADD component function
$y_{u,A}$, $\emptyset\ne u \subseteq\{1,\cdots,N\}$. Clearly, the approximate variance in (\ref{12})
approaches the exact variance
\begin{equation*}
\sigma^{2}:=\mathbb{E}\left[\left(y(\mathbf{X})-y_{\emptyset,A}\right)^{2}\right]=\sum_{\emptyset\ne u\subseteq\{1,\cdots,N\}}\sigma_{u}^{2}=\sum_{s=1}^{N}\:\sum_{{\textstyle {\emptyset\ne u\subseteq\{1,\cdots,N\}\atop |u|=s}}}\sigma_{u}^{2},\label{13}
\end{equation*}
the sum of all variance terms, when $S\to N$. A normalized version $\sigma_{u}^2/\sigma^2$ is often called the global sensitivity index of $y$ for $\mathbf{X}_u$ \cite{sobol01}.

\subsection{Error from General Approximation}

Define a mean-squared error
\begin{equation}
e_{S,A}:=\mathbb{E}\left[\left(y(\mathbf{X})-\hat{y}_{S,A}(\mathbf{X})\right)^{2}\right]:=\int_{\mathbb{R}^{N}}\left[y(\mathbf{x})-\hat{y}_{S,A}(\mathbf{x})\right]^{2}f_{\mathbf{X}}(\mathbf{x})d\mathbf{x}\label{14}
\end{equation}
committed by the $S$-variate ADD approximation $\hat{y}_{S,A}(\mathbf{X})$ of $y(\mathbf{X})$.
Replacing $y$ and $\hat{y}_{S,A}$ in (\ref{14}) with the
right sides of (\ref{4a}) and (\ref{11}), respectively, and
then recognizing Propositions \ref{p1} and \ref{p2} yields
\begin{equation}
e_{S,A}=\sum_{{\textstyle {\emptyset\ne u\subseteq\{1,\cdots,N\}\atop S+1\le|u|\le N}}}\sigma_{u}^{2}=\sum_{s=S+1}^{N}\:\sum_{{\textstyle {\emptyset\ne u\subseteq\{1,\cdots,N\}\atop |u|=s}}}\sigma_{u}^{2},\label{15}
\end{equation}
which completely eliminates the variance terms of $\sigma^{2}$ that
are associated with $S$- and all lower-variate contributions, an
attractive property of ADD. By setting $S=0,1,2,\cdots$, the error
can be expressed for any truncation of ADD.

\subsection{Optimality}

Among all possible measures, the probability measure endows the ADD approximation with an error-minimizing property, explained as follows.

\begin{proposition}
For a given $S$, the $S$-variate ADD approximation is optimal in the mean-square sense.
\label{p4}
\end{proposition}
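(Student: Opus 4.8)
The plan is to deduce the proposition from Theorem~\ref{t1}. The crucial observation is that the ADD is nothing but the dimensional decomposition (\ref{3b})--(\ref{3c}) specialized to the generic measure $w(\mathbf{x})d\mathbf{x}=f_{\mathbf{X}}(\mathbf{x})d\mathbf{x}$. Hence Theorem~\ref{t1}, invoked with this choice of $w$, asserts that the truncated ADD component functions $y_{u,A}$, $0\le|u|\le S$, are the unique minimizers of $\mathbb{E}[(y(\mathbf{X})-\sum_{0\le|u|\le S}y_u(\mathbf{X}_u))^2]$ among all component functions obeying the annihilating constraints (\ref{3}). What remains is to upgrade this to optimality among \emph{all} $S$-variate approximations, i.e., among arbitrary functions of the form $g=\sum_{0\le|u|\le S}g_u(\mathbf{X}_u)$, with the $g_u$ subject to no constraint.

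To carry this out I would write the error of an arbitrary $S$-variate $g$ as $y-g=(y-\hat{y}_{S,A})+(\hat{y}_{S,A}-g)$. By (\ref{4a}) and (\ref{11}), the first bracket is $y-\hat{y}_{S,A}=\sum_{\emptyset\ne u\subseteq\{1,\cdots,N\},\,|u|\ge S+1}y_{u,A}(\mathbf{X}_u)$, a sum of ADD component functions of cardinality at least $S+1$. The second bracket $\hat{y}_{S,A}-g$ is again an $S$-variate function; expanding it in its own ADD yields component functions $h_{w,A}$ supported only on subsets $w$ with $|w|\le S$. Consequently $\mathbb{E}[(y-\hat{y}_{S,A})(\hat{y}_{S,A}-g)]=\sum_{u,w}\mathbb{E}[y_{u,A}(\mathbf{X}_u)h_{w,A}(\mathbf{X}_w)]$, and each summand vanishes: by Proposition~\ref{p2} when $\emptyset\ne w$ (since $|u|\ge S+1>S\ge|w|$ forces $u\ne w$), and by Proposition~\ref{p1} when $w=\emptyset$.

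Hence the cross term is zero, and $\mathbb{E}[(y-g)^2]=\mathbb{E}[(y-\hat{y}_{S,A})^2]+\mathbb{E}[(\hat{y}_{S,A}-g)^2]\ge\mathbb{E}[(y-\hat{y}_{S,A})^2]=e_{S,A}$, with equality if and only if $g=\hat{y}_{S,A}$ almost everywhere. This is precisely the statement that $\hat{y}_{S,A}$ is the mean-square-optimal, and essentially unique, $S$-variate approximation of $y$, the minimal error being the quantity $\sum_{|u|\ge S+1}\sigma_u^2$ of (\ref{15}).

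The one step deserving care, which I would isolate as a short lemma, is the claim that the ADD of a function already expressible as a sum of at-most-$S$-variate pieces has no component function of cardinality exceeding $S$. This is immediate from the explicit formula (\ref{5}) (or from the recursion (\ref{4})) together with the identity $\int_{\mathbb{R}}h(\mathbf{x}_w)f_i(x_i)\,dx_i=h(\mathbf{x}_w)$, valid whenever $i\notin w$: for $|v|>S$, every summand of such a function omits some variable indexed by $v$ and is therefore annihilated by the corresponding projection. I expect this bookkeeping to be the main, albeit minor, obstacle; the rest is a routine application of Propositions~\ref{p1} and~\ref{p2}.
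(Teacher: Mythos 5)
Your argument is correct and is essentially the paper's own proof: both decompose $y-\hat{y}_{S}$ as $(y-\hat{y}_{S,A})+(\hat{y}_{S,A}-\hat{y}_{S})$, observe that the first part contains only higher-than-$S$-variate ADD terms while the second is at most $S$-variate, and conclude by orthogonality and the Pythagorean identity. You are somewhat more careful than the paper, which simply asserts the orthogonality of the two brackets, whereas you justify it by expanding $\hat{y}_{S,A}-g$ in its own ADD and invoking Propositions~\ref{p1} and~\ref{p2}; this added bookkeeping is sound and fills a gap the paper leaves implicit.
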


\begin{proof}
Consider a generic $S$-variate approximation $\hat{y}_{S}(\mathbf{X})$
of $y(\mathbf{X})$ other than the ADD approximation $\hat{y}_{S,A}(\mathbf{X})$.
Since $y(\mathbf{X})-\hat{y}_{S,A}(\mathbf{X})$ contains
only higher than $S$-variate terms and $\hat{y}_{S,A}(\mathbf{X})-\hat{y}_{S}(\mathbf{X})$
contains at most $S$-variate terms, $y-\hat{y}_{S,A}$
and $\hat{y}_{S,A}-\hat{y}_{S}$ are orthogonal, satisfying $\mathbb{E}\left[\left(y(\mathbf{X})-\hat{y}_{S,A}(\mathbf{X})\right)\left(\hat{y}_{S,A}(\mathbf{X})-\hat{y}_{S}(\mathbf{X})\right)\right]=0$.
Consequently, the second-moment error from an $S$-variate approximation
\begin{align*}
e_{S} & := \mathbb{E}\left[\left(y(\mathbf{X})-\hat{y}_{S}(\mathbf{X})\right)^{2}\right]\\
 & =  \mathbb{E}\left[\left(y(\mathbf{X})-\hat{y}_{S,A}(\mathbf{X})\right)^{2}\right]+\mathbb{E}\left[\left(\hat{y}_{S,A}(\mathbf{X})-\hat{y}_{S}(\mathbf{X})\right)^{2}\right]\\
 & =  e_{S,A}+\mathbb{E}\left[\left(\hat{y}_{S,A}(\mathbf{X})-\hat{y}_{S}(\mathbf{X})\right)^{2}\right]\ge e_{S,A},\label{16}
\end{align*}
proving the mean-square optimality of the $S$-variate ADD approximation.
\end{proof}

Therefore, given a truncation, an RDD approximation, regardless of how the reference point is selected, cannot be better than an ADD approximation for calculating variance.
Further details of RDD approximation errors are described in the next section.

\section{RDD Approximation}

\subsection{Direct Form}

The $S$-variate RDD approximation $\hat{y}_{S,R}(\mathbf{X};\mathbf{c})$,
say, of $y(\mathbf{X})$, where $0\le S<N$, is obtained by truncating
the right side of (\ref{8a}) at $0\le|u|\le S$,
yielding
\begin{equation}
\hat{y}_{S,R}(\mathbf{X};\mathbf{c})={\displaystyle \sum_{{\textstyle {u\subseteq\{1,\cdots,N\}\atop 0\le|u|\le S}}}y_{u,R}(\mathbf{X}_{u};\mathbf{c})},\label{17}
\end{equation}
which depends on the reference point, needing the second argument
``$\mathbf{c}$'' in $\hat{y}_{S,R}$. For error analysis, however,
a suitable direct form of (\ref{17}) is desirable. Theorem
\ref{t2} supplies such a form, which was originally obtained by Xu and Rahman
\cite{xu04} using the Taylor series expansion. The same form was
reported later by Kuo \emph{et al}. \cite{kuo10}.

Let $\mathbf{j}_{k}=(j_{1},\cdots,j_{k})\in\mathbb{N}_0^{k}$, $1\le k\le S$, be a $k$-dimensional multi-index with each component representing a non-negative
integer. The multi-index, used in Theorem \ref{t2}, obeys the following
standard notations: (1) $|\mathbf{j}_{k}|=\sum_{p=1}^{p=k}j_{p}$;
(2) $\mathbf{j}_{k}!=\prod_{p=1}^{p=k}j_{p}!$; (3) ${\displaystyle \partial^{\mathbf{j}_{k}}y(\mathbf{c})=\partial^{j_{1}+\cdots+j_{k}}y(\mathbf{c})/\partial X_{i_{1}}^{j_{1}}\cdots\partial X_{i_{k}}^{j_{k}}}$;
(4) $(\mathbf{X}_{u}-\mathbf{c}_{u})^{\mathbf{j}_{k}}=\prod_{p=1}^{p=k}(X_{i_{p}}-c_{i_{p}})^{j_{p}}$,
$1\le i_{1}<\cdots<i_{k}\le N$.

\begin{theorem} [Multivariate Function Theorem \cite{xu04}]
For a differentiable multivariate
function $y:\mathbb{R}^{N}\to\mathbb{R}$ and $0\le S<N$, if
\begin{equation}
\hat{y}_{S,R}(\mathbf{X};\mathbf{c})={\displaystyle \sum_{k=0}^{S}}(-1)^{k}\binom{N-S+k-1}{k}\sum_{{\textstyle {u\subseteq\{1,\cdots,N\}\atop |u|=S-k}}}y(\mathbf{X}_{u},\mathbf{c}_{-u})\label{18}
\end{equation}
represents an $S$-variate RDD approximation of $y(\mathbf{X})$,
then $\hat{y}_{S,R}(\mathbf{X};\mathbf{c})$ consists of all
terms of the Taylor series expansion of $y(\mathbf{X})$ at $\mathbf{c}$
that have less than or equal to $S$ variables, i.e.,
\begin{equation*}
\hat{y}_{S,R}(\mathbf{X};\mathbf{c})={\displaystyle \sum_{k=0}^{S}}t_{k},\label{19}
\end{equation*}
where
\begin{align*}
t_{0} & =  y(\mathbf{c}),\\
t_{k} & =  {\displaystyle \sum_{{\textstyle {\mathbf{j}_{k} \in \mathbb{N}_0^k \atop j_1,\cdots,j_k \neq 0}}}}{\displaystyle \frac{1}{\mathbf{j}_{k}!}{\displaystyle \sum_{{\textstyle {\emptyset\ne u\subseteq\{1,\cdots,N\}\atop |u|=k}}}}}\partial^{\mathbf{j}_{k}}y(\mathbf{c})\left(\mathbf{X}_{u}-\mathbf{c}_{u}\right)^{\mathbf{j}_{k}};\;1\le k\le S.\label{20}
\end{align*}
\label{t2}
\end{theorem}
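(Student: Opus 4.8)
The statement bundles two assertions: that the truncated RDD $\hat{y}_{S,R}(\mathbf{X};\mathbf{c})=\sum_{0\le|u|\le S}y_{u,R}(\mathbf{X}_{u};\mathbf{c})$ of (\ref{17}) has the closed ``direct'' form (\ref{18}), and that this object equals the block $\sum_{k=0}^{S}t_{k}$ of the Taylor expansion of $y$ at $\mathbf{c}$ that involves at most $S$ variables. The plan is to prove these in turn, treating the Taylor expansion termwise, so that ``differentiable'' is read as ``of class high enough for the derivatives below to exist'' and every identity is understood among the monomials $(\mathbf{X}-\mathbf{c})^{\mathbf{j}}$, $\mathbf{j}\in\mathbb{N}_{0}^{N}$ (it holds verbatim for analytic $y$). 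The one fact doing the real work is elementary: for $w\subseteq\{1,\cdots,N\}$, Taylor-expanding the slice $y(\mathbf{X}_{w},\mathbf{c}_{-w})$ at $\mathbf{c}$ leaves precisely the monomials $\frac{1}{\mathbf{j}!}\partial^{\mathbf{j}}y(\mathbf{c})(\mathbf{X}-\mathbf{c})^{\mathbf{j}}$ with $\{i:j_{i}>0\}\subseteq w$, since freezing $X_{i}=c_{i}$ for $i\notin w$ annihilates every monomial carrying some $X_{i}$, $i\notin w$. Writing $T_{w}$ for the sum of those monomials whose active set is \emph{exactly} $w$, this says $y(\mathbf{X}_{w},\mathbf{c}_{-w})=\sum_{w'\subseteq w}T_{w'}$, and, matching the definition of $t_{k}$ in the theorem, $t_{k}=\sum_{|u|=k}T_{u}$.

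First I would pin down the Taylor content by showing, by induction on $|u|$, that $y_{u,R}(\mathbf{X}_{u};\mathbf{c})=T_{u}$. The base case $u=\emptyset$ is $y_{\emptyset,R}=y(\mathbf{c})=T_{\emptyset}=t_{0}$. For $|u|>0$ the recursion (\ref{8c}) gives $y_{u,R}(\mathbf{X}_{u};\mathbf{c})=y(\mathbf{X}_{u},\mathbf{c}_{-u})-\sum_{v\subset u}y_{v,R}(\mathbf{X}_{v};\mathbf{c})$; substituting the slice identity $y(\mathbf{X}_{u},\mathbf{c}_{-u})=\sum_{w\subseteq u}T_{w}$ together with the inductive hypothesis $y_{v,R}=T_{v}$ for every proper subset $v\subset u$ makes the right-hand side telescope down to $T_{u}$. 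Summing over $0\le|u|\le S$ then yields $\hat{y}_{S,R}(\mathbf{X};\mathbf{c})=\sum_{0\le|u|\le S}T_{u}=\sum_{k=0}^{S}\sum_{|u|=k}T_{u}=\sum_{k=0}^{S}t_{k}$, the claimed low-variate Taylor block.

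Then I would derive the direct form (\ref{18}): invert (\ref{8c}) --- equivalently, truncate the explicit RDD (\ref{9}) at $|u|\le S$ --- to write $\hat{y}_{S,R}(\mathbf{X};\mathbf{c})=\sum_{|u|\le S}\sum_{v\subseteq u}(-1)^{|u|-|v|}y(\mathbf{X}_{v},\mathbf{c}_{-v})$, interchange the two sums, and collect. The coefficient of $y(\mathbf{X}_{v},\mathbf{c}_{-v})$ is $\sum_{j=0}^{S-|v|}(-1)^{j}\binom{N-|v|}{j}$, since exactly $\binom{N-|v|}{j}$ sets $u\supseteq v$ satisfy $|u|=|v|+j$; the identity $\sum_{j=0}^{r}(-1)^{j}\binom{n}{j}=(-1)^{r}\binom{n-1}{r}$ (one line via Pascal's rule and telescoping) collapses it to $(-1)^{S-|v|}\binom{N-|v|-1}{S-|v|}$, and putting $k=S-|v|$ and regrouping the $v$'s by cardinality reproduces (\ref{18}) verbatim. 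As an independent check, expanding (\ref{18}) itself by the slice identity gives the coefficient of $T_{w}$ as $\sum_{k}(-1)^{k}\binom{N-S+k-1}{k}\binom{N-|w|}{S-|w|-k}$, which by upper negation $(-1)^{k}\binom{N-S+k-1}{k}=\binom{S-N}{k}$ and Vandermonde's convolution equals $\binom{S-|w|}{S-|w|}=1$ when $|w|\le S$, while for $|w|>S$ there is no admissible $v\supseteq w$ --- so exactly the block of Taylor monomials in at most $S$ variables survives. The only genuine obstacle is the combinatorial identity used in whichever route one prefers; everything else is bookkeeping, and the point to watch is keeping the ``$\le S$ versus $>S$'' cutoff clean so that the truncation yields precisely that block and nothing past it.
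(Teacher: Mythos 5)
Your proof is correct, but note that the paper itself contains no proof of this theorem: it defers the Taylor-series characterization to Xu and Rahman \cite{xu04} (who prove it on pages 1996--2000 of their paper, taking $\mathbf{c}=\mathbf{0}$ without loss of generality) and defers the identification of the truncated recursive form (\ref{17}) with the direct form (\ref{18}) to \cite{rahman11}. Your argument supplies both deferred facts in a self-contained way, by the same general route (termwise Taylor expansion) that the paper attributes to the original proof. The key device --- showing by induction on the recursion (\ref{8c}) that each RDD component $y_{u,R}(\mathbf{X}_{u};\mathbf{c})$ equals $T_{u}$, the sum of Taylor monomials whose active variable set is exactly $u$, so that truncation at $|u|\le S$ is exactly the block $\sum_{k=0}^{S}t_{k}$ --- is clean, and the slice identity $y(\mathbf{X}_{w},\mathbf{c}_{-w})=\sum_{w'\subseteq w}T_{w'}$ does all the work. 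The passage from the truncated explicit form (\ref{9}) to the closed form (\ref{18}) via $\sum_{j=0}^{r}(-1)^{j}\binom{n}{j}=(-1)^{r}\binom{n-1}{r}$ checks out (with $n=N-|v|\ge 1$ since $|v|\le S<N$), as does the Vandermonde cross-check that each $T_{w}$ with $|w|\le S$ receives coefficient $1$ and each with $|w|>S$ receives coefficient $0$. Your caveat that ``differentiable'' must be read as strong enough for the Taylor expansion to be meaningful (termwise, or for analytic $y$) is apt; the theorem as stated is loose on this point, and the cited original shares the issue.
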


Xu and Rahman \cite{xu04} proved this theorem in pages 1996-2000 of their paper when $\mathbf{c}=\mathbf{0}$ without loss of generality. The stochastic method associated with
the RDD approximation was simply called {}``decomposition method'' \cite{xu05}. Theorem \ref{t2} implies that the RDD approximation
$\hat{y}_{S,R}(\mathbf{X};\mathbf{c})$ in (\ref{18}), when compared with the Taylor series expansion of $y(\mathbf{X})$,
yields residual error that includes only terms of dimensions $S+1$
and higher. All higher-order $S$- and lower-variate terms of $y(\mathbf{X})$
are included in (\ref{18}), which should therefore generally
provide a higher-order approximation of a multivariate function than the
equation derived from an $S$-order Taylor expansion.

Equations (\ref{17}) and (\ref{18}) both follow the same structure of (\ref{2}). However, due to the distinct perspectives involved,
it is not obvious if these equations represent the same function $\hat{y}_{S,R}$. A lemma and a theorem recently proved by the author demonstrate that, indeed, they do \cite{rahman11}.

\begin{corollary}
When $S=0$, $1$, and $2$, (\ref{18}) degenerates to the zero-variate RDD approximation
\begin{equation}
\hat{y}_{0,R}(\mathbf{X};\mathbf{c})=y(\mathbf{c}),\label{25b}
\end{equation}
the univariate RDD approximation
\begin{equation}
\hat{y}_{1,R}(\mathbf{X};\mathbf{c})={\displaystyle \sum_{i=1}^{N}y(X_{i},\mathbf{c}_{-\{i\}})}-(N-1)y(\mathbf{c}),\label{26}
\end{equation}
and the bivariate RDD approximation
\begin{equation}
\begin{split}
\hat{y}_{2,R}(\mathbf{X};\mathbf{c}) & =  {\displaystyle \sum_{i=1}^{N-1}\sum_{j=i+1}^{N}}y(X_{i},X_{j},\mathbf{c}_{-\{i,j\}}){\displaystyle -(N-2){\displaystyle \sum_{i=1}^{N}y(X_{i},\mathbf{c}_{-\{i\}})}}\\
 &   \;\;\;\; +{\displaystyle \frac{1}{2}}{\displaystyle (N-1)(N-2)}y(\mathbf{c}),
 \end{split}\label{27}
\end{equation}
respectively.
\label{c0}
\end{corollary}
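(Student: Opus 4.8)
The plan is to derive the three stated identities by direct specialization of the direct-form formula (\ref{18}) to the values $S=0$, $1$, and $2$, evaluating in each case the outer sum over $k$, the binomial coefficient $\binom{N-S+k-1}{k}$, and the inner sum over subsets $u$ of fixed cardinality $S-k$. Since the hypothesis $0\le S<N$ forces $N-S\ge 1$, all binomial coefficients occurring for $0\le k\le S$ have nonnegative upper index and are therefore ordinary binomial coefficients; in particular no degenerate or vanishing cases intervene, and the argument is purely computational.

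For $S=0$ only the term $k=0$ survives in (\ref{18}); there $(-1)^{0}\binom{N-1}{0}=1$, and the inner sum ranges over the single subset $u=\emptyset$, for which $(\mathbf{X}_{u},\mathbf{c}_{-u})=\mathbf{c}$, yielding (\ref{25b}). For $S=1$ the terms $k=0$ and $k=1$ contribute: the $k=0$ term has coefficient $(-1)^{0}\binom{N-2}{0}=1$ and its inner sum runs over the $N$ singletons $u=\{i\}$, producing $\sum_{i=1}^{N}y(X_{i},\mathbf{c}_{-\{i\}})$, while the $k=1$ term has coefficient $(-1)^{1}\binom{N-1}{1}=-(N-1)$ and its inner sum is over $u=\emptyset$, giving $-(N-1)y(\mathbf{c})$; adding these gives (\ref{26}).

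For $S=2$ the terms $k=0,1,2$ contribute with coefficients $\binom{N-3}{0}=1$, $-\binom{N-2}{1}=-(N-2)$, and $\binom{N-1}{2}=\frac{1}{2}(N-1)(N-2)$, respectively, and their inner sums range over the $\binom{N}{2}$ pairs $u=\{i,j\}$, the $N$ singletons $u=\{i\}$, and the empty set. Writing the sum over pairs in ordered form as $\sum_{i=1}^{N-1}\sum_{j=i+1}^{N}$ and collecting the three contributions reproduces (\ref{27}).

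There is essentially no hard step here: the corollary is a literal substitution into (\ref{18}), so no part of Theorem \ref{t2} beyond the formula (\ref{18}) itself is needed. The only points deserving a moment's care are the bookkeeping of the upper index $N-S+k-1$ of the binomial coefficient for each pair $(S,k)$, and the elementary identification of the inner set-indexed sums with the explicitly indexed single and double sums appearing in (\ref{26}) and (\ref{27}).
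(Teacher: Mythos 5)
Your proof is correct and is exactly the intended derivation: the paper states Corollary \ref{c0} without proof as a literal specialization of (\ref{18}) to $S=0,1,2$, and your term-by-term evaluation of the coefficients $(-1)^{k}\binom{N-S+k-1}{k}$ and the inner subset sums is the computation the paper leaves to the reader. Your remark that $0\le S<N$ keeps every upper index $N-S+k-1$ nonnegative, so no degenerate binomial cases arise, is a correct and worthwhile bit of care.
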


\begin{remark}
Since the right side of (\ref{26}) comprises
only univariate functions, the interpolation or integration of $\hat{y}_{1,R}(\mathbf{X};\mathbf{c})$
is essentially univariate. Similarly, the right side of (\ref{27}),
which contains at most bivariate functions, requires at most bivariate
interpolation or integration of $\hat{y}_{2,R}(\mathbf{X};\mathbf{c})$.
Therefore, appellation of the terms {}``univariate approximation'' and {}``bivariate approximation''
for $\hat{y}_{1,R}(\mathbf{X};\mathbf{c})$ in (\ref{26})
and $\hat{y}_{2,R}(\mathbf{X};\mathbf{c})$ in (\ref{27}),
respectively, is more appropriate than referring to them as first-order
and second-order approximations.
\end{remark}

\subsection{Expected Error}

Following similar consideration, define another mean-squared error
\begin{equation}
\begin{split}
e_{S,R}(\mathbf{c})&
:=\mathbb{E}\left[\left(y(\mathbf{X})-\hat{y}_{S,R}(\mathbf{X};\mathbf{c})\right)^{2}\right]
:=\int_{\mathbb{R}^{N}}\left[y(\mathbf{x})-\hat{y}_{S,R}(\mathbf{x};\mathbf{c})\right]^{2}f_{\mathbf{X}}(\mathbf{x})d\mathbf{x}
\end{split}\label{28}
\end{equation}
associated with the $S$-variate RDD approximation $\hat{y}_{S,R}(\mathbf{X};\mathbf{c})$
of $y(\mathbf{X})$, which depends on the reference point $\mathbf{c}$.
Wang \cite{wang08} suggested choosing a random reference point uniformly
distributed over $[0,1]^{N}$ and then calculating the error on average.
But, $\mathbf{X}$ defined here may follow an arbitrary probability
law with density $f_{\mathbf{X}}(\mathbf{x})$; therefore,
selecting the reference point characterized by the probability density
$f_{\mathbf{X}}(\mathbf{c})$ is more appropriate, which leads
to
\begin{equation}
\begin{split}
\mathbb{E}\left[e_{S,R}(\mathbf{c})\right] & :=\int_{\mathbb{R}^{N}}e_{S,R}(\mathbf{c})f_{\mathbf{X}}(\mathbf{c})d\mathbf{c} \\
&
=\int_{\mathbb{R}^{2N}}\left[y(\mathbf{x})-\hat{y}_{S,R}(\mathbf{x};\mathbf{c})\right]^{2}f_{\mathbf{X}}(\mathbf{x})f_{\mathbf{X}}(\mathbf{c})d\mathbf{x}d\mathbf{c},
\end{split}\label{29}
\end{equation}
as the expected value of the RDD error. Simplifying (\ref{29})
in terms of the variance components of $y$, as done for the ADD error in (\ref{15}),
for arbitrary $S$ and $N$ may appear formidable. Here, the \emph{zero}-variate
($S=0$), univariate ($S=1$), and bivariate ($S=2$) approximation
errors for arbitrary $N$ will be derived first, followed by error
analysis for a general $S$-variate approximation. In all cases, the
derivations require using (1) the relationships,
\begin{subequations}
\begin{align}
y_{\emptyset,A} & = \int_{\mathbb{R}^{N}}y(\mathbf{c})f_{\mathbf{X}}(\mathbf{c})d\mathbf{c},\label{30a}\\
y_{u,A}(\mathbf{x}_{u}) & = \int_{\mathbb{R}^{N}}y_{u,R}(\mathbf{x}_{u};\mathbf{c})f_{\mathbf{X}}(\mathbf{c})d\mathbf{c},\label{30b}\\
\hat{y}_{S,A}(\mathbf{x}) & = \int_{\mathbb{R}^{N}}\hat{y}_{S,R}(\mathbf{x};\mathbf{c})f_{\mathbf{X}}(\mathbf{c})d\mathbf{c} \label{30c},
\end{align}
\label{30}
\end{subequations}
\!\!that exist between ADD and RDD component functions and approximations
and (2) Sobol's formula \cite{sobol01},
\begin{equation}
D_{u}:=\sum_{v\subseteq u}\sigma_{v}^{2}=\int_{\mathbb{R}^{2N-|u|}}y(\mathbf{x})y(\mathbf{x}_{u},\mathbf{c}_{-u})f_{\mathbf{X}}(\mathbf{x})f_{\mathbf{X}_{-u}}(\mathbf{c}_{-u})d\mathbf{x}d\mathbf{c}_{-u}-y_{\emptyset,A}^{2},\label{31}
\end{equation}
for select choices of $u$ described in the following subsection.
Equations (\ref{30a}) and (\ref{30b}) follow from Propositions \ref{p1}, \ref{p2}, and \ref{p3} and definitions of respective component functions in (\ref{4}) and (\ref{8}), eventually
leading to (\ref{30c}). The term $D_{u}$ in Sobol's formula represents a sum of variance
terms contributed by the ADD component functions that belong to $\emptyset\ne u\subseteq\{1,\cdots,N\}$.

\subsection{Expected Errors from Zero-variate, Univariate, and Bivariate Approximations}

Theorems \ref{t4}, \ref{t5}, and \ref{t6} show how the expected errors from the \emph{zero}-variate,
univariate, and bivariate RDD approximations, respectively, depend
on the variance components of $y$.

\begin{theorem}
Let $\mathbf{c}=(c_{1},\cdots,c_{N})\in\mathbb{R}^{N}$
be a random vector with the joint probability density function of
the form $f_{\mathbf{X}}(\mathbf{c})=\prod_{j=1}^{j=N}f_{j}(c_{j})$,
where $f_{j}$ is the marginal probability density function of its
$j$th coordinate. Then the expected error committed by the zero-variate
RDD approximation for $1\le N<\infty$ is
\begin{equation*}
\mathbb{E}\left[e_{0,R}(\mathbf{c})\right]=2\sigma^{2},\label{31b}
\end{equation*}
where $\sigma^{2}:=\mathbb{E}\left[\left(y(\mathbf{X})-y_{\emptyset,A}\right)^{2}\right]=\mathbb{E}\left[y^2(\mathbf{X})\right]-y_{\emptyset,A}^2$
is the variance of $y$.
\label{t4}
\end{theorem}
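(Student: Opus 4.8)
The plan is to substitute the explicit form of the zero‑variate RDD approximation into the double integral (\ref{29}) and expand the square. By (\ref{25b}) of Corollary \ref{c0}, $\hat{y}_{0,R}(\mathbf{x};\mathbf{c})=y(\mathbf{c})$, so (\ref{29}) with $S=0$ reads
\begin{equation*}
\mathbb{E}\left[e_{0,R}(\mathbf{c})\right]=\int_{\mathbb{R}^{2N}}\left[y(\mathbf{x})-y(\mathbf{c})\right]^{2}f_{\mathbf{X}}(\mathbf{x})f_{\mathbf{X}}(\mathbf{c})\,d\mathbf{x}\,d\mathbf{c}.
\end{equation*}
First I would expand $\left[y(\mathbf{x})-y(\mathbf{c})\right]^{2}=y^{2}(\mathbf{x})-2y(\mathbf{x})y(\mathbf{c})+y^{2}(\mathbf{c})$ and split the integral into three pieces. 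Since $f_{\mathbf{X}}(\mathbf{x})f_{\mathbf{X}}(\mathbf{c})\,d\mathbf{x}\,d\mathbf{c}$ is a product measure on $\mathbb{R}^{2N}$ and $y\in\mathcal{L}_{2}(\Omega,\mathcal{F},P)$, Tonelli's theorem justifies integrating $\mathbf{x}$ and $\mathbf{c}$ separately in each piece; the finiteness of $\mathbb{E}[y^2(\mathbf{X})]$ guarantees all three integrals are finite.

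Next I would evaluate the pieces using the normalization $\int_{\mathbb{R}^{N}}f_{\mathbf{X}}=1$ and the definition (\ref{30a}) of $y_{\emptyset,A}$. The first term integrates to $\mathbb{E}\left[y^{2}(\mathbf{X})\right]$ (the $\mathbf{c}$‑integral contributing a factor $1$), and by symmetry in $\mathbf{x}\leftrightarrow\mathbf{c}$ the third term gives the same value $\mathbb{E}\left[y^{2}(\mathbf{X})\right]$. The middle term factors as $-2\bigl(\int_{\mathbb{R}^{N}}y(\mathbf{x})f_{\mathbf{X}}(\mathbf{x})\,d\mathbf{x}\bigr)\bigl(\int_{\mathbb{R}^{N}}y(\mathbf{c})f_{\mathbf{X}}(\mathbf{c})\,d\mathbf{c}\bigr)=-2y_{\emptyset,A}^{2}$. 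Adding the three contributions yields $2\mathbb{E}\left[y^{2}(\mathbf{X})\right]-2y_{\emptyset,A}^{2}=2\bigl(\mathbb{E}\left[y^{2}(\mathbf{X})\right]-y_{\emptyset,A}^{2}\bigr)$, which equals $2\sigma^{2}$ by the stated identity $\sigma^{2}=\mathbb{E}\left[y^{2}(\mathbf{X})\right]-y_{\emptyset,A}^{2}$, and the bound $1\le N<\infty$ plays no role beyond ensuring the setup makes sense.

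There is no real obstacle here: the argument is a one‑line expand‑and‑integrate, and the only point worth a remark is the appeal to the product structure of the joint density of $(\mathbf{X},\mathbf{c})$ so that the cross term separates into the square of the mean. I would therefore present the computation in full and flag the Tonelli interchange as the sole technical ingredient, noting that it is automatic because $y$ is square‑integrable against $f_{\mathbf{X}}(\mathbf{x})\,d\mathbf{x}$.
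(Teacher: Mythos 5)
Your proposal is correct and follows essentially the same route as the paper's proof: set $S=0$ in (\ref{29}), substitute $\hat{y}_{0,R}(\mathbf{x};\mathbf{c})=y(\mathbf{c})$ from (\ref{25b}), expand the square, and evaluate the three resulting integrals using the product structure of the measure and the relation (\ref{30a}) to obtain $2(\mathbb{E}[y^{2}(\mathbf{X})]-y_{\emptyset,A}^{2})=2\sigma^{2}$. The only addition is your explicit mention of Tonelli's theorem, which the paper leaves implicit.
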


\begin{proof}
Setting $S=0$ in (\ref{29}) and using
the expression of $\hat{y}_{0,R}(\mathbf{x};\mathbf{c})$
from (\ref{25b}), the expected error from the \emph{zero}-variate RDD approximation becomes
\begin{equation*}
\begin{split}
\mathbb{E}\left[e_{0,R}(\mathbf{c})\right] & = \int_{\mathbb{R}^{2N}}\left[y(\mathbf{x})-y(\mathbf{c})\right]^{2}f_{\mathbf{X}}(\mathbf{x})f_{\mathbf{X}}(\mathbf{c})d\mathbf{x}d\mathbf{c}\\
 & = \int_{\mathbb{R}^{N}}y^{2}(\mathbf{x})f_{\mathbf{X}}(\mathbf{x})d\mathbf{x}+\int_{\mathbb{R}^{N}}y^{2}(\mathbf{c})f_{\mathbf{X}}(\mathbf{c})d\mathbf{c}\\
 & \;\;\;\;   -2\int_{\mathbb{R}^{N}}y(\mathbf{x})f_{\mathbf{X}}(\mathbf{x})d\mathbf{x}\int_{\mathbb{R}^{N}}y(\mathbf{c})f_{\mathbf{X}}(\mathbf{c})d\mathbf{c}\\
 & =  \sigma^{2}+y_{\emptyset,A}^{2}+\sigma^{2}+y_{\emptyset,A}^{2}-2y_{\emptyset,A}^{2}\\
 & =  2\sigma^{2},
 \end{split}\label{31c}
\end{equation*}
where the third equality exploits the ADD-RDD relationship in (\ref{30a}). Hence, the theorem is proven.
\end{proof}

\begin{theorem}
Let $\mathbf{c}=(c_{1},\cdots,c_{N})\in\mathbb{R}^{N}$
be a random vector with the joint probability density function of
the form $f_{\mathbf{X}}(\mathbf{c})=\prod_{j=1}^{j=N}f_{j}(c_{j})$,
where $f_{j}$ is the marginal probability density function of its
$j$th coordinate. Then the expected error committed by the univariate
RDD approximation for $2\le N<\infty$ is
\begin{equation}
\mathbb{E}\left[e_{1,R}(\mathbf{c})\right]={\displaystyle \sum_{s=2}^{N}\left(s^{2}-s+2\right)\sum_{{\textstyle {\emptyset\neq u\subseteq\{1,\cdots,N\}\atop |u|=s}}}}\sigma_{u}^{2},\label{32}
\end{equation}
where $\sigma_{u}^{2}=\mathbb{E}\left[y_{u,A}^{2}(\mathbf{X}_{u})\right]$ is the variance of the \emph{zero}-mean ADD component function $y_{u,A}$, $\emptyset\ne u\subseteq\{1,\cdots,N\}$.
\label{t5}
\end{theorem}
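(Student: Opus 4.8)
The plan is to reduce the computation of $\mathbb{E}[e_{1,R}(\mathbf{c})]$ to applications of Sobol's formula (\ref{31}) applied to singletons and pairs, combined with the ADD variance decomposition and the orthogonality Propositions \ref{p1}, \ref{p2}. First I would substitute the direct form (\ref{26}) of $\hat{y}_{1,R}(\mathbf{X};\mathbf{c})$ into (\ref{29}) with $S=1$, writing $y(\mathbf{x})-\hat{y}_{1,R}(\mathbf{x};\mathbf{c}) = y(\mathbf{x}) - \sum_{i=1}^{N} y(x_i,\mathbf{c}_{-\{i\}}) + (N-1)y(\mathbf{c})$, and then expand the square. This produces a finite collection of integrals over $\mathbb{R}^{2N}$ (against $f_{\mathbf{X}}(\mathbf{x})f_{\mathbf{X}}(\mathbf{c})$): the pure terms $\mathbb{E}[y^2(\mathbf{X})]$, $\mathbb{E}[y^2(\mathbf{c})]$, terms of type $\int y(x_i,\mathbf{c}_{-\{i\}})y(x_j,\mathbf{c}_{-\{j\}})$ (splitting into $i=j$ and $i\neq j$), cross terms $\int y(\mathbf{x})y(x_i,\mathbf{c}_{-\{i\}})$, and terms involving $y(\mathbf{c})$. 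The key observation is that each such integral can be identified with a $D_u$ from Sobol's formula for an appropriate $u$: e.g. $\int y(\mathbf{x})y(x_i,\mathbf{c}_{-\{i\}})f_{\mathbf{X}}(\mathbf{x})f_{\mathbf{X}}(\mathbf{c})\,d\mathbf{x}\,d\mathbf{c}$ collapses (after integrating $c_i$ out trivially) to $D_{\{i\}} + y_{\emptyset,A}^2$, and the diagonal term $\int y^2(x_i,\mathbf{c}_{-\{i\}})$ also equals $D_{\{i\}}+y_{\emptyset,A}^2$, while the off-diagonal $\int y(x_i,\mathbf{c}_{-\{i\}})y(x_j,\mathbf{c}_{-\{j\}})$ with $i\neq j$ equals $\sigma_{\emptyset}$-type plus $\sigma_{\{i\}}^2+\sigma_{\{j\}}^2$ contributions, i.e. relates to $D_{\{i\}}$ and $D_{\{j\}}$ suitably.

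Next I would assemble all the pieces. Using $D_u = \sum_{v\subseteq u}\sigma_v^2$, one has $D_{\{i\}} = \sigma_{\{i\}}^2$ (the empty-set term $\sigma_\emptyset^2$ is zero by convention since $\sigma^2$ counts only nonempty $u$; more precisely $D_u$ absorbs $y_{\emptyset,A}^2$ separately as written in (\ref{31})). I expect, after collecting, that every $\sigma_u^2$ with $|u|=1$ cancels identically — this is the defining feature of the univariate approximation and is guaranteed in spirit by Theorem \ref{t1}'s error-minimizing structure, though here we are measuring against the probability measure rather than the anchored measure, so the cancellation is of the variance of the ADD singletons, not a tautology. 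For a general $u$ with $|u|=s\geq 2$, the coefficient of $\sigma_u^2$ should emerge as a polynomial in $s$ coming from counting how many of the $N$ anchored slices $y(x_i,\mathbf{c}_{-\{i\}})$ "see" the variable group $u$: a slice $y(x_i,\mathbf{c}_{-\{i\}})$ contributes the ADD component $y_{u,A}$ only when $u\subseteq\{i\}$, i.e. never when $|u|\geq 2$, so those slices contribute $y_{u,A}$ with coefficient $0$; hence $\hat{y}_{1,R}$ contains no $y_{u,A}$ piece for $|u|\ge 2$ and the residual $y - \hat{y}_{1,R}$, when expanded in the ADD basis, carries $y_{u,A}$ with a coefficient depending on how many slices/terms overlap $u$. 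Tracking the bookkeeping, $y(\mathbf{x})$ contributes coefficient $1$; of the $N$ slices, those indexed by $i\in u$ contribute (via the part of $y(x_i,\mathbf{c}_{-\{i\}})$ that depends on $x_i$ but integrated against the other $c$'s) and those with $i\notin u$ do not; the constant $(N-1)y(\mathbf{c})$ contributes nothing to $y_{u,A}$. This should yield, after squaring and using orthogonality, the coefficient $1 + (\text{something quadratic in } s)$, matching $s^2-s+2$.

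The main obstacle will be the careful combinatorial accounting of the cross terms, specifically keeping straight which ADD components survive each partial integration. The subtlety is that $\int y(x_i,\mathbf{c}_{-\{i\}})f_{-\{i\}}(\mathbf{c}_{-\{i\}})\,d\mathbf{c}_{-\{i\}}$ equals $y_{\emptyset,A} + y_{\{i\},A}(x_i)$ by the recursive ADD definition (\ref{4}), and when we multiply two slices and integrate fully we get contributions only from ADD components $y_{w,A}$ with $w\subseteq\{i\}$ and $w\subseteq\{j\}$, i.e. $w=\emptyset$ only when $i\neq j$. Thus $y-\hat{y}_{1,R}$ expanded in ADD components reads $\sum_{|u|\ge 2} c_{u} y_{u,A} + \sum_{|u|=1}(\cdots)y_{u,A}$ where the weight on each $y_{u,A}$, $|u|=s\ge 2$, must be computed; once that weight $w(s)$ is pinned down, Proposition \ref{p2} gives $\mathbb{E}[e_{1,R}(\mathbf{c})] = \sum_{s\ge 2}\sum_{|u|=s} w(s)^2\,\sigma_u^2$ plus the genuinely $\mathbf{c}$-averaged cross-variance pieces that do not vanish — and here one must also account for the extra variance coming from the fact that $\hat{y}_{1,R}(\mathbf{x};\mathbf{c})$ itself, as a function of the random $\mathbf{c}$, fluctuates, contributing its own ADD-in-$\mathbf{c}$ variance that inflates the error beyond the naive projection bound. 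Reconciling the "$w(s)^2$ from the residual" with the "$+2$ offset from the $\mathbf{c}$-fluctuation" to land exactly on $s^2 - s + 2$ is the delicate endgame; I would verify it first for $s=2$ against Theorem \ref{t6} (where the coefficient should be $4-2+2=4$) as a consistency check before asserting the general polynomial.
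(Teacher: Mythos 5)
Your overall strategy --- substitute the direct form (\ref{26}) into (\ref{29}) with $S=1$, expand the square into three groups of integrals over $\mathbb{R}^{2N}$, and evaluate each via Sobol's formula (\ref{31}) together with ADD orthogonality --- is exactly the paper's route, and your treatment of the cross term $-2\int y\,\hat y_{1,R}$ (each piece collapsing to $D_{\{i\}}+y_{\emptyset,A}^{2}$) is correct and equivalent to the paper's detour through (\ref{30c}). However, the combinatorial heart of the argument, the evaluation of $\int \hat y_{1,R}^{2}$, contains two concrete misidentifications that would derail the computation. First, the diagonal term $\int y^{2}(x_{i},\mathbf{c}_{-\{i\}})f_{\mathbf{X}}(\mathbf{x})f_{\mathbf{X}}(\mathbf{c})\,d\mathbf{x}\,d\mathbf{c}$ is \emph{not} $D_{\{i\}}+y_{\emptyset,A}^{2}$: since $(x_{i},\mathbf{c}_{-\{i\}})$ carries $N$ independent coordinates with the correct marginals, this integral is the full second moment $\sigma^{2}+y_{\emptyset,A}^{2}$. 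Second, for the off-diagonal term with $i\neq j$, the set $u$ in Sobol's formula must be the set of \emph{shared} arguments of the two slices, which is $-\{i,j\}$ (both slices see $c_{k}$ for $k\notin\{i,j\}$, while $x_{i},c_{j},x_{j},c_{i}$ are four distinct independent variables); the integral equals $D_{-\{i,j\}}+y_{\emptyset,A}^{2}=\sum_{v\subseteq -\{i,j\}}\sigma_{v}^{2}+y_{\emptyset,A}^{2}$ and has nothing to do with $D_{\{i\}}$ or $D_{\{j\}}$. The coefficient $s^{2}-s+1$ of $\sigma_{u}^{2}$, $|u|=s$, inside $\int\hat y_{1,R}^{2}$ arises precisely by counting, for fixed $u$, how many of the terms $D_{-\{i\}}$ and $D_{-\{i,j\}}$ contain $\sigma_u^2$ (i.e., $i\notin u$, respectively $i,j\notin u$, giving $N-s$ and $\tbinom{N-s}{2}$ occurrences); with your identifications this count cannot be performed and the polynomial in $s$ will not emerge.

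The second gap is the ``endgame'' you explicitly defer. Your picture of expanding $y-\hat y_{1,R}$ in the ADD basis of $\mathbf{x}$ with scalar weights $w(s)$ does not hold: $\hat y_{1,R}(\mathbf{x};\mathbf{c})$ is a function of $2N$ variables, and each slice contributes terms $y_{v,A}$ evaluated at mixed arguments (some coordinates from $\mathbf{x}$, some from $\mathbf{c}$), which are random variables orthogonal to, not multiples of, $y_{v,A}(\mathbf{x}_{v})$. The ``$\mathbf{c}$-fluctuation'' is therefore not a correction layered on top of a projection bound; it is the entire content of the third integral and must be computed outright. Your proposed sanity check is also off: Theorem \ref{t6} concerns the bivariate truncation, whose error contains no $|u|=2$ terms at all, so it cannot confirm the value $4$ of the univariate coefficient at $s=2$ (that value is instead visible in the lower bound of Corollary \ref{c2}). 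In short, the skeleton matches the paper, but the two Sobol-set identifications must be corrected and the counting actually carried out before the claimed coefficient $s^{2}-s+2$ is established.
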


\begin{proof}
Setting $S=1$ in (\ref{29}), the expected
error from the univariate RDD approximation on expansion is a sum
\begin{equation*}
\mathbb{E}\left[e_{1,R}(\mathbf{c})\right]=I_{1,1}+I_{1,2}+I_{1,3}\label{33}
\end{equation*}
of three integrals
\begin{equation*}
\begin{split}
I_{1,1} & := \int_{\mathbb{R}^{2N}}y^{2}(\mathbf{x})f_{\mathbf{X}}(\mathbf{x})f_{\mathbf{X}}(\mathbf{c})d\mathbf{x}d\mathbf{c},\\
I_{1,2} & :=  -2\int_{\mathbb{R}^{2N}}y(\mathbf{x})\hat{y}_{1,R}(\mathbf{x};\mathbf{c})f_{\mathbf{X}}(\mathbf{x})f_{\mathbf{X}}(\mathbf{c})d\mathbf{x}d\mathbf{c},\\
I_{1,3} & :=  \int_{\mathbb{R}^{2N}}\hat{y}_{1,R}^{2}(\mathbf{x};\mathbf{c})f_{\mathbf{X}}(\mathbf{x})f_{\mathbf{X}}(\mathbf{c})d\mathbf{x}d\mathbf{c}
\end{split}\label{34}
\end{equation*}
on $\mathbb{R}^{2N}$, where their first indices represent the univariate
approximation. The first integral
\begin{equation}
I_{1,1}=\mathbb{E}\left[y^{2}(\mathbf{X})\right]=y_{\emptyset,A}^{2}+\sigma^{2}=y_{\emptyset,A}^{2}+\sum_{s=1}^{N}\:\sum_{{\textstyle {\emptyset\ne u\subseteq\{1,\cdots,N\}\atop |u|=s}}}\sigma_{u}^{2},\label{35}
\end{equation}
expressed in terms of the variance components, is independent of $S$.
However, the second integral depends on $S$, yielding
\begin{equation}
\begin{split}
I_{1,2} & = -2\int_{\mathbb{R}^{N}}y(\mathbf{x})\hat{y}_{1,A}(\mathbf{x})f_{\mathbf{X}}(\mathbf{x})d\mathbf{x}\\
 & = -2\int_{\mathbb{R}^{N}}\hat{y}_{1,A}^{2}(\mathbf{x})f_{\mathbf{X}}(\mathbf{x})d\mathbf{x}\\
 & =  -2\mathbb{E}\left[\hat{y}_{1,A}^{2}(\mathbf{X})\right] \\
 & =  -2\left(y_{\emptyset,A}^{2}+\hat{\sigma}_{1,A}^{2}\right) \\
 & =  -2y_{\emptyset,A}^{2}-2{\displaystyle \sum_{{\textstyle {\emptyset\ne u\subseteq\{1,\cdots,N\}\atop |u|=1}}}}\sigma_{u}^{2},
\end{split}\label{36}
\end{equation}
where the first, second, and fifth lines are obtained (1) employing the
ADD-RDD relationships in (\ref{30c}) for $S=1$, (2) recognizing
$y-\hat{y}_{1,A}$ and $\hat{y}_{1,A}$ to be orthogonal, satisfying $\mathbb{E}\left[\left(y(\mathbf{X})-\hat{y}_{1,A}(\mathbf{X})\right)\hat{y}_{1,A}(\mathbf{X})\right]=0$, and (3) applying (\ref{12}) for $S=1$, respectively.
Using the expression of $\hat{y}_{1,R}(\mathbf{x};\mathbf{c})$
from (\ref{26}) and noting independent coordinates of $\mathbf{X}$
and $\mathbf{c}$, the expanded third integral becomes
\begin{equation}
\begin{split}
I_{1,3} & =  \int_{\mathbb{R}^{2N}}{\displaystyle \biggl[\sum_{i=1}^{N}y(x_{i},\mathbf{c}_{-\{i\}})}-(N-1)y(\mathbf{c})\biggr]^{2}f_{\mathbf{X}}(\mathbf{x})f_{\mathbf{X}}(\mathbf{c})d\mathbf{x}d\mathbf{c}\\
 & =  \int_{\mathbb{R}^{2N}}\biggl[{\displaystyle \sum_{i=1}^{N}}y^{2}(x_{i},\mathbf{c}_{-\{i\}})+2{\displaystyle \sum_{i=1}^{N-1}\sum_{j=i+1}^{N}}y(x_{i},\mathbf{c}_{-\{i\}})y(x_{j},\mathbf{c}_{-\{j\}})\\
 &  \;\;\; +(N-1)^{2}y^{2}(\mathbf{c})-2(N-1){\displaystyle \sum_{i=1}^{N}}y(X_{i},\mathbf{c}_{-\{i\}})y(\mathbf{c})\biggr]f_{\mathbf{X}}(\mathbf{x})f_{\mathbf{X}}(\mathbf{c})d\mathbf{x}d\mathbf{c}.
\end{split}\label{37}
\end{equation}
Further evaluation of this integral requires exploiting Sobol's formula
in (\ref{31}) for $u=-\{i\}$ and $u=-\{i,j\}$, where $i,j=1,\cdots,N$,
$i<j$, yielding \vspace{-0.02in}
\begin{equation}
\begin{split}
I_{1,3} & = (N^{2}-N+1)\left(\sigma^{2}+y_{\emptyset,A}^{2}\right)-2(N-1){\displaystyle \sum_{i=1}^{N}\left(D_{-\{i\}}+y_{\emptyset,A}^{2}\right)}\\
 &   \;\;\; +2{\displaystyle \sum_{i=1}^{N-1}\sum_{j=i+1}^{N}}\left(D_{-\{i,j\}}+y_{\emptyset,A}^{2}\right)\\
 & =  y_{\emptyset,A}^{2}+{\displaystyle \sum_{{\textstyle {\emptyset\ne u\subseteq\{1,\cdots,N\}\atop |u|=1}}}}\sigma_{u}^{2}+3{\displaystyle \sum_{{\textstyle {\emptyset\ne u\subseteq\{1,\cdots,N\}\atop |u|=2}}}}\sigma_{u}^{2}+\cdots\\
 &   \;\;\; +\left(s^{2}-s+1\right){\displaystyle \sum_{{\textstyle {\emptyset\ne u\subseteq\{1,\cdots,N\}\atop |u|=s}}}}\sigma_{u}^{2}+\cdots+\left(N^{2}-N+1\right){\displaystyle \sum_{{\textstyle {\emptyset\ne u\subseteq\{1,\cdots,N\}\atop |u|=N}}}}\sigma_{u}^{2}\\
 & =  b_{1}(0)y_{\emptyset,A}^{2}+{\displaystyle \sum_{s=1}^{N}b_{1}(s)\sum_{{\textstyle {\emptyset\ne u\subseteq\{1,\cdots,N\}\atop |u|=s}}}}\sigma_{u}^{2},
\end{split}\label{37b}
\end{equation}

\vspace{-0.09in}
\noindent where
\begin{equation*}
b_{1}(s)=s^{2}-s+1,\;0\le s\le N,\label{38}
\end{equation*}
is the generic $s$-variate coefficient for the univariate approximation,
obtained by counting the number of $y_{\emptyset,A}^2$ or $\sigma_{u}^{2}$
for $|u|=s$ $-$ \emph{e.g.}, $\sigma_{1}^{2}$ for $s=1$,
$\sigma_{12}^{2}$ for $s=2$, and so on $-$ due to symmetry. Adding
all terms in (\ref{35}), (\ref{36}), and (\ref{37b}),
with the recognition that $b_{1}(0)=b_{1}(1)=1$, yields (\ref{32}),
completing the proof.
\end{proof}

\begin{theorem}
Let $\mathbf{c}=(c_{1},\cdots,c_{N})\in\mathbb{R}^{N}$
be a random vector with the joint probability density function of
the form $f_{\mathbf{X}}(\mathbf{c})=\prod_{j=1}^{j=N}f_{j}(c_{j})$,
where $f_{j}$ is the marginal probability density function of its
$j$th coordinate. Then the expected error committed by the bivariate
RDD approximation for $3\le N<\infty$ is
\begin{equation}
\mathbb{E}\left[e_{2,R}(\mathbf{c})\right]={\displaystyle \sum_{s=3}^{N}{\displaystyle \frac{1}{4}}\left(s^{4}-2s^{3}-s^{2}+2s+8\right)\sum_{{\textstyle {\emptyset\neq u\subseteq\{1,\cdots,N\}\atop |u|=s}}}}\sigma_{u}^{2},\label{39}
\end{equation}
where $\sigma_{u}^{2}=\mathbb{E}\left[y_{u,A}^{2}(\mathbf{X}_{u})\right]$ is the variance of
the \emph{zero}-mean ADD component function $y_{u,A}$,
$\emptyset\ne u\subseteq\{1,\cdots,N\}$.
\label{t6}
\end{theorem}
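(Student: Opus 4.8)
The plan is to follow the template of the proof of Theorem~\ref{t5}. Setting $S=2$ in~(\ref{29}) and expanding the square gives $\mathbb{E}[e_{2,R}(\mathbf{c})]=I_{2,1}+I_{2,2}+I_{2,3}$, where
\begin{align*}
I_{2,1}&:=\int_{\mathbb{R}^{2N}}y^{2}(\mathbf{x})f_{\mathbf{X}}(\mathbf{x})f_{\mathbf{X}}(\mathbf{c})d\mathbf{x}d\mathbf{c},\\
I_{2,2}&:=-2\int_{\mathbb{R}^{2N}}y(\mathbf{x})\hat{y}_{2,R}(\mathbf{x};\mathbf{c})f_{\mathbf{X}}(\mathbf{x})f_{\mathbf{X}}(\mathbf{c})d\mathbf{x}d\mathbf{c},\\
I_{2,3}&:=\int_{\mathbb{R}^{2N}}\hat{y}_{2,R}^{2}(\mathbf{x};\mathbf{c})f_{\mathbf{X}}(\mathbf{x})f_{\mathbf{X}}(\mathbf{c})d\mathbf{x}d\mathbf{c}.
\end{align*}
Exactly as in Theorem~\ref{t5}, $I_{2,1}=\mathbb{E}[y^{2}(\mathbf{X})]=y_{\emptyset,A}^{2}+\sum_{s=1}^{N}\sum_{|u|=s}\sigma_{u}^{2}$ is independent of $S$; and, using the ADD--RDD relation~(\ref{30c}) with $S=2$ to integrate out $\mathbf{c}$ and then the orthogonality of $y-\hat{y}_{2,A}$ (which contains only terms of dimension $\ge3$) and $\hat{y}_{2,A}$ (which contains only terms of dimension $\le2$), I would obtain $I_{2,2}=-2\mathbb{E}[y(\mathbf{X})\hat{y}_{2,A}(\mathbf{X})]=-2\mathbb{E}[\hat{y}_{2,A}^{2}(\mathbf{X})]=-2y_{\emptyset,A}^{2}-2\hat{\sigma}_{2,A}^{2}$, which by~(\ref{12}) equals $-2y_{\emptyset,A}^{2}-2\sum_{|u|=1}\sigma_{u}^{2}-2\sum_{|u|=2}\sigma_{u}^{2}$.

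The substance of the proof is $I_{2,3}$. Writing~(\ref{27}) compactly as $\hat{y}_{2,R}(\mathbf{x};\mathbf{c})=\sum_{|p|\le2}\alpha_{p}y(\mathbf{x}_{p},\mathbf{c}_{-p})$, with $p$ ranging over subsets of $\{1,\cdots,N\}$ of cardinality at most $2$ and $\alpha_{p}=1$ for $|p|=2$, $\alpha_{p}=-(N-2)$ for $|p|=1$, $\alpha_{\emptyset}=\frac{1}{2}(N-1)(N-2)$, one has $I_{2,3}=\sum_{|p|\le2}\sum_{|q|\le2}\alpha_{p}\alpha_{q}J_{p,q}$ with
\begin{equation*}
J_{p,q}:=\int_{\mathbb{R}^{2N}}y(\mathbf{x}_{p},\mathbf{c}_{-p})y(\mathbf{x}_{q},\mathbf{c}_{-q})f_{\mathbf{X}}(\mathbf{x})f_{\mathbf{X}}(\mathbf{c})d\mathbf{x}d\mathbf{c}.
\end{equation*}
The key computational tool, a mild extension of Sobol's formula~(\ref{31}), is the identity
\begin{equation*}
J_{p,q}=D_{-(p\triangle q)}+y_{\emptyset,A}^{2}=\sum_{\emptyset\ne w\subseteq-(p\triangle q)}\sigma_{w}^{2}+y_{\emptyset,A}^{2},
\end{equation*}
where $p\triangle q$ is the symmetric difference, so that $-(p\triangle q)=(p\cap q)\cup\bigl(-(p\cup q)\bigr)$. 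I would prove this by ADD-expanding each factor, $y(\mathbf{x}_{p},\mathbf{c}_{-p})=\sum_{w}y_{w,A}(\cdot)$, and applying Propositions~\ref{p1} and~\ref{p2}: integrating against $f_{\mathbf{X}}(\mathbf{x})f_{\mathbf{X}}(\mathbf{c})$ kills every product $y_{w_{1},A}y_{w_{2},A}$ except those with $w_{1}=w_{2}=w$ indexed by the variables shared \emph{in the same role} by both anchored factors, namely $x_{k}$ for $k\in p\cap q$ and $c_{k}$ for $k\notin p\cup q$, so that the surviving index set is exactly $-(p\triangle q)$. Taking $p=\{1,\cdots,N\}$ and $q=u$ recovers~(\ref{31}), and for $|p|,|q|\le1$ this is precisely what the proof of Theorem~\ref{t5} invoked when it applied~(\ref{31}) with $-\{i\}$ and $-\{i,j\}$.

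Granting the identity, $I_{2,3}=\bigl(\sum_{|p|\le2}\alpha_{p}\bigr)^{2}y_{\emptyset,A}^{2}+\sum_{|p|\le2}\sum_{|q|\le2}\alpha_{p}\alpha_{q}D_{-(p\triangle q)}$. For a fixed $u$ with $|u|=s\ge1$, the term $\sigma_{u}^{2}$ occurs in $D_{-(p\triangle q)}$ exactly when $u\cap(p\triangle q)=\emptyset$, i.e.\ when $p$ and $q$ agree on $u$ (that is, $p\cap u=q\cap u$); hence, by symmetry, the coefficient of $\sigma_{u}^{2}$ in $I_{2,3}$ depends only on $s$ and equals
\begin{equation*}
b_{2}(s):=\sum_{J\subseteq u}\biggl(\ \sum_{\{p\,:\,|p|\le2,\ p\cap u=J\}}\alpha_{p}\ \biggr)^{2}.
\end{equation*}
A short count (with $|{-u}|=N-s$) gives the inner sum as $\frac{1}{2}(s-1)(s-2)$ if $J=\emptyset$, as $2-s$ if $|J|=1$, as $1$ if $|J|=2$, and $0$ if $|J|\ge3$, and---notably---none of these depend on $N$. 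Therefore
\begin{equation*}
b_{2}(s)=\frac{1}{4}(s-1)^{2}(s-2)^{2}+s(s-2)^{2}+\frac{1}{2}s(s-1)=\frac{1}{4}\bigl(s^{4}-2s^{3}-s^{2}+2s+4\bigr),
\end{equation*}
and similarly $\bigl(\sum_{|p|\le2}\alpha_{p}\bigr)^{2}=1$, i.e.\ the coefficient of $y_{\emptyset,A}^{2}$ from $I_{2,3}$ is $b_{2}(0)=1$. Adding, for each $s$, the coefficients of $\sum_{|u|=s}\sigma_{u}^{2}$ contributed by $I_{2,1}$ ($+1$ for all $s\ge1$), $I_{2,2}$ ($-2$ for $s=1,2$ and $0$ for $s\ge3$), and $I_{2,3}$ ($b_{2}(s)$), the $y_{\emptyset,A}^{2}$ term and the $s=1,2$ terms cancel because $1-2+b_{2}(1)=1-2+b_{2}(2)=0$, and what remains is $1+b_{2}(s)=\frac{1}{4}(s^{4}-2s^{3}-s^{2}+2s+8)$ for $3\le s\le N$, which is precisely~(\ref{39}).

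The hard part will be the generalized Sobol identity for $J_{p,q}$ together with the ensuing bookkeeping for $b_{2}(s)$: one must correctly isolate the variables shared in the same role by the two anchored factors---so the retained index set is $-(p\triangle q)$ rather than $p\cap q$ or $-(p\cup q)$---and one must keep careful track of which subsets $J\subseteq u$ actually admit an admissible $p$ with $|p|\le2$. Everything else is routine algebra running in exact parallel with the proof of Theorem~\ref{t5}.
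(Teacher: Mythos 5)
Your proposal is correct, and it follows the paper's skeleton exactly: the same split $\mathbb{E}[e_{2,R}(\mathbf{c})]=I_{2,1}+I_{2,2}+I_{2,3}$, the same evaluation of $I_{2,1}$ and $I_{2,2}$ via (\ref{30c}) and orthogonality, and the recognition that all the work sits in $I_{2,3}$. Where you genuinely depart from the paper is in how $I_{2,3}$ is organized. The paper expands the square of (\ref{27}) into its six types of products and invokes Sobol's formula (\ref{31}) case by case for the seven complement sets $-\{i\},-\{j\},-\{i,j\},-\{i,k\},-\{j,k\},-\{i,j,k\},-\{i,j,k,l\}$, then reads off the generic coefficient $b_{2}(s)$ ``by counting the number of $\sigma_{u}^{2}$ for $|u|=s$ due to symmetry,'' displaying only $b_2(3)=7$ and the claimed general pattern. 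You instead package every cross term into the single identity $J_{p,q}=D_{-(p\triangle q)}+y_{\emptyset,A}^{2}$ (which correctly specializes to (\ref{31}) when $p=\{1,\cdots,N\}$, and whose proof via Propositions \ref{p1}--\ref{p2} is sound: a product $y_{w_1,A}y_{w_2,A}$ survives only when $w_1=w_2=w$ and every index of $w$ is fed the \emph{same} random coordinate in both factors, forcing $w\subseteq-(p\triangle q)$), and then obtain $b_{2}(s)=\sum_{J\subseteq u}\bigl(\sum_{p\cap u=J}\alpha_{p}\bigr)^{2}$ in closed form. Your inner sums $\tfrac{1}{2}(s-1)(s-2)$, $2-s$, $1$ check out (the $N$-dependence cancels exactly as you assert), and they reassemble to $\tfrac{1}{4}(s^4-2s^3-s^2+2s+4)$, matching the paper's $b_2(s)$; the final cancellations at $s=0,1,2$ and the resulting coefficient $1+b_2(s)$ for $s\ge 3$ agree with (\ref{39}). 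What your version buys is rigor and scalability: the sum-of-squares formula for the coefficient is a proof rather than a symmetry-based count, and it extends verbatim to arbitrary $S$ with $\alpha_p=(-1)^{S-|p|}\binom{N-|p|-1}{S-|p|}$, which would yield Theorem \ref{t7}'s coefficient $b_S(s)$ directly --- a cleaner route than the paper's own variable-swapping heuristic used there.
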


\begin{proof}
Setting $S=2$ in (\ref{29}), the expected
error from the bivariate RDD approximation on expansion is another
sum
\begin{equation*}
\mathbb{E}\left[e_{2,R}(\mathbf{c})\right]=I_{2,1}+I_{2,2}+I_{2,3}\label{40}
\end{equation*}
of three $2N$-dimensional integrals
\begin{equation*}
\begin{split}
I_{2,1} & := \int_{\mathbb{R}^{2N}}y^{2}(\mathbf{x})f_{\mathbf{X}}(\mathbf{x})f_{\mathbf{X}}(\mathbf{c})d\mathbf{x}d\mathbf{c},\\
I_{2,2} & := -2\int_{\mathbb{R}^{2N}}y(\mathbf{x})\hat{y}_{2,R}(\mathbf{x};\mathbf{c})f_{\mathbf{X}}(\mathbf{x})f_{\mathbf{X}}(\mathbf{c})d\mathbf{x}d\mathbf{c},\\
I_{2,3} & :=  \int_{\mathbb{R}^{2N}}\hat{y}_{2,R}^{2}(\mathbf{x};\mathbf{c})f_{\mathbf{X}}(\mathbf{x})f_{\mathbf{X}}(\mathbf{c})d\mathbf{x}d\mathbf{c},
\end{split}\label{41}
\end{equation*}
where their first indices represent the bivariate approximation. Since
the first integral does not depend on $S$,
\begin{equation}
I_{2,1}=y_{\emptyset,A}^{2}+\sigma^{2}=y_{\emptyset,A}^{2}+\sum_{s=1}^{N}\:\sum_{{\textstyle {\emptyset\ne u\subseteq\{1,\cdots,N\}\atop |u|=s}}}\sigma_{u}^{2}\label{42}
\end{equation}
is the same as $I_{1,1}$. Following a similar reasoning employed
for the univariate approximation, the second integral
\begin{equation}
\begin{split}
I_{2,2} & = -2\int_{\mathbb{R}^{N}}y(\mathbf{x})\hat{y}_{2,A}(\mathbf{x})f_{\mathbf{X}}(\mathbf{x})d\mathbf{x}\\
 & = -2\int_{\mathbb{R}^{N}}\hat{y}_{2,A}^{2}(\mathbf{x})f_{\mathbf{X}}(\mathbf{x})d\mathbf{x}\\
 & =  -2\mathbb{E}\left[\hat{y}_{2,A}^{2}(\mathbf{X})\right] = -2\left(y_{\emptyset,A}^{2}+\hat{\sigma}_{2,A}^{2}\right) \\
 & =  -2y_{\emptyset,A}^{2}-2{\displaystyle \sum_{{\textstyle {\emptyset\ne u\subseteq\{1,\cdots,N\}\atop |u|=1}}}}\sigma_{u}^{2}-2{\displaystyle \sum_{{\textstyle {\emptyset\ne u\subseteq\{1,\cdots,N\}\atop |u|=2}}}}\sigma_{u}^{2}
\end{split}\label{43}
\end{equation}
contains variance terms associated with at most two variables. Using
the expression of $\hat{y}_{2,R}(\mathbf{x};\mathbf{c})$
from (\ref{27}), the expanded third integral becomes
\begin{equation*}
\begin{split}
I_{2,3} & =  \int_{\mathbb{R}^{2N}}\biggl[{\displaystyle \sum_{i=1}^{N-1}\sum_{j=i+1}^{N}}y(x_{i},x_{j},\mathbf{c}_{-\{i,j\}}){\displaystyle -(N-2){\displaystyle \sum_{i=1}^{N}y(x_{i},\mathbf{c}_{-\{i\}})}}\\
 &  \;\;\; +{\displaystyle \frac{1}{2}(N-1)(N-2)}y(\mathbf{c})\biggr]^{2}f_{\mathbf{X}}(\mathbf{x})f_{\mathbf{X}}(\mathbf{c})d\mathbf{x}d\mathbf{c}\\
 & =  \int_{\mathbb{R}^{2N}}\biggl\{\biggl[{\displaystyle \sum_{i=1}^{N-1}\sum_{j=i+1}^{N}}y(x_{i},x_{j},\mathbf{c}_{-\{i,j\}})\biggr]^{2}{\displaystyle +(N-2)^{2}{\displaystyle \biggl[\sum_{i=1}^{N}y(x_{i},\mathbf{c}_{-\{i\}})\biggr]^{2}}}\\
 &  \;\;\; +{\displaystyle \frac{1}{4}(N-1)^{2}(N-2)^{2}}y^{2}(\mathbf{c})\\
 &  \;\;\; -2(N-2)\biggl[{\displaystyle \sum_{i=1}^{N-1}\sum_{j=i+1}^{N}}y(x_{i},x_{j},\mathbf{c}_{-\{i,j\}})\biggr]\biggl[{\displaystyle \sum_{i=1}^{N}}y(x_{i},\mathbf{c}_{-\{i\}})\biggr]\\
 &  \;\;\; -(N-1)(N-2)^{2}\biggl[{\displaystyle \sum_{i=1}^{N}}y(x_{i},\mathbf{c}_{-\{i\}})\biggr]y(\mathbf{c})\\
 &  \;\;\; +(N-1)(N-2)\biggl[{\displaystyle \sum_{i=1}^{N-1}\sum_{j=i+1}^{N}}y(x_{i},x_{j},\mathbf{c}_{-\{i,j\}})\biggr]y(\mathbf{c})\biggr\} f_{\mathbf{X}}(\mathbf{x})f_{\mathbf{X}}(\mathbf{c})d\mathbf{x}d\mathbf{c}.
\end{split}\label{43b}
\end{equation*}
Employing Sobol's formula, this time for $u=-\{i\}$, $u=-\{j\}$,
$u=-\{i,j\}$, $u=-\{i,k\}$, $u=-\{j,k\}$, $u=-\{i,j,k\}$, and
$u=-\{i,j,k,l\}$, where $i,j,k,l=1,\cdots,N$, $i<j<k<l$, results in
\begin{equation}
\begin{split}
I_{2,3} & =  {\displaystyle \frac{N(N-1)}{2}}\left(\sigma^{2}+y_{\emptyset,A}^{2}\right)\\
 &  \;\;\; +2{\displaystyle \sum_{i=1}^{N-2}\sum_{j=i+1}^{N-1}\sum_{k=j+1}^{N}}\left(D_{-\{i,j\}}+D_{-\{i,k\}}+D_{-\{j,k\}}+3y_{\emptyset,A}^{2}\right)\\
 &  \;\;\; +6{\displaystyle \sum_{i=1}^{N-3}\sum_{j=i+1}^{N-2}\sum_{k=j+1}^{N-1}}{\displaystyle \sum_{l=k+1}^{N}}\left(D_{-\{i,j,k,l\}}+y_{\emptyset,A}^{2}\right)+N(N-2)^{2}{\displaystyle \left(\sigma^{2}+y_{\emptyset,A}^{2}\right)}\\
 &  \;\;\; +2(N-2)^{2}{\displaystyle \sum_{i=1}^{N-1}\sum_{j=i+1}^{N}}\left(D_{-\{i,j\}}+y_{\emptyset,A}^{2}\right)+{\displaystyle {\displaystyle \frac{1}{4}}(N-1)^{2}(N-2)^{2}}{\displaystyle \left(\sigma^{2}+y_{\emptyset,A}^{2}\right)}\\
 &  \;\;\; -2(N-2){\displaystyle \sum_{i=1}^{N-1}\sum_{j=i+1}^{N}}\left(D_{-\{i\}}+D_{-\{j\}}+2y_{\emptyset,A}^{2}\right)\\
 &  \;\;\; -6(N-2){\displaystyle \sum_{i=1}^{N-2}\sum_{j=i+1}^{N-1}}{\displaystyle \sum_{k=j+1}^{N}}\left(D_{-\{i,j,k\}}+y_{\emptyset,A}^{2}\right)\\
 &  \;\;\; -(N-1)(N-2)^{2}{\displaystyle \sum_{i=1}^{N}\left(D_{-\{i\}}+y_{\emptyset,A}^{2}\right)}\\
 &  \;\;\; +(N-1)(N-2){\displaystyle \sum_{i=1}^{N-1}\sum_{j=i+1}^{N}}\left(D_{-\{i,j\}}+y_{\emptyset,A}^{2}\right)\\
 & =  y_{\emptyset,A}^{2}+{\displaystyle \sum_{{\textstyle {\emptyset\ne u\subseteq\{1,\cdots,N\}\atop |u|=1}}}}\sigma_{u}^{2}+{\displaystyle \sum_{{\textstyle {\emptyset\ne u\subseteq\{1,\cdots,N\}\atop |u|=2}}}}\sigma_{u}^{2}+7{\displaystyle \sum_{{\textstyle {\emptyset\ne u\subseteq\{1,\cdots,N\}\atop |u|=3}}}}\sigma_{u}^{2}\\
 &  \;\;\; +\cdots+{\displaystyle \frac{1}{4}}\left(s^{4}-2s^{3}-s^{2}+2s+4\right){\displaystyle \sum_{{\textstyle {\emptyset\ne u\subseteq\{1,\cdots,N\}\atop |u|=s}}}}\sigma_{u}^{2}\\
 &  \;\;\; +\cdots+{\displaystyle \frac{1}{4}}\left(N^{4}-2N^{3}-N^{2}+2N+4\right){\displaystyle \sum_{{\textstyle {\emptyset\ne u\subseteq\{1,\cdots,N\}\atop |u|=N}}}}\sigma_{u}^{2}\\
 & =  b_{2}(0)y_{\emptyset,A}^{2}+{\displaystyle \sum_{s=1}^{N}b_{2}(s)\sum_{{\textstyle {\emptyset\ne u\subseteq\{1,\cdots,N\}\atop |u|=s}}}}\sigma_{u}^{2},
\end{split}\label{44}
\end{equation}
producing the generic $s$-variate coefficient
\begin{equation*}
b_{2}(s)=\frac{1}{4}\left(s^{4}-2s^{3}-s^{2}+2s+4\right),\;0\le s\le N,\label{45}
\end{equation*}
for the bivariate approximation. Adding all terms in (\ref{42}), (\ref{43}), and (\ref{44}), with the understanding that $b_{2}(0)=b_{2}(1)=b_{2}(2)=1$,
yields (\ref{39}), proving the theorem.
\end{proof}

\begin{corollary}
The expected error $\mathbb{E}\left[e_{0,R}\right]$
from the zero-variate RDD approximation, expressed in terms of the
error $e_{0,A}$ from the zero-variate ADD approximation, is
\begin{equation*}
\mathbb{E}\left[e_{0,R}\right]=2e_{0,A},\;1\le N<\infty.\label{45b}
\end{equation*}
\label{c1}
\end{corollary}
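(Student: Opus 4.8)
The plan is simply to chain together Theorem~\ref{t4} with the $S=0$ instance of the ADD error formula \eqref{15}; no new computation is needed. First I would recall from Theorem~\ref{t4} that, under the stated hypothesis that the reference point $\mathbf{c}$ is random with density $f_{\mathbf{X}}(\mathbf{c})=\prod_{j=1}^{N}f_j(c_j)$, the expected error from the zero-variate RDD approximation is $\mathbb{E}\left[e_{0,R}(\mathbf{c})\right]=2\sigma^{2}$ for $1\le N<\infty$, where $\sigma^{2}=\mathbb{E}\left[(y(\mathbf{X})-y_{\emptyset,A})^{2}\right]$ is the variance of $y$.

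Next I would identify $e_{0,A}$ with $\sigma^{2}$. Setting $S=0$ in \eqref{11} gives $\hat{y}_{0,A}(\mathbf{X})=y_{\emptyset,A}$, so the mean-squared error \eqref{14} at $S=0$ becomes $e_{0,A}=\mathbb{E}\left[(y(\mathbf{X})-y_{\emptyset,A})^{2}\right]=\sigma^{2}$; equivalently, one reads this off from \eqref{15} at $S=0$, which yields $e_{0,A}=\sum_{s=1}^{N}\sum_{|u|=s}\sigma_u^{2}=\sigma^{2}$. Substituting this into the expression from Theorem~\ref{t4} gives $\mathbb{E}\left[e_{0,R}\right]=2\sigma^{2}=2e_{0,A}$, valid for all $1\le N<\infty$, which is the claimed identity.

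There is essentially no obstacle here: the corollary is a direct reformulation of Theorem~\ref{t4} in terms of the ADD error rather than the variance. The only points requiring care are (i) confirming that the hypotheses on $\mathbf{c}$ and the range $1\le N<\infty$ are inherited verbatim from Theorem~\ref{t4}, and (ii) noting that the zero-variate ADD approximation is precisely the constant mean $y_{\emptyset,A}$, so that its mean-squared error coincides with $\sigma^{2}$ by definition.
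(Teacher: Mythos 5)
Your proposal is correct and matches the paper's (implicit) argument exactly: the corollary is stated without a separate proof precisely because it follows by combining Theorem \ref{t4} ($\mathbb{E}[e_{0,R}(\mathbf{c})]=2\sigma^{2}$) with the observation that the zero-variate ADD approximation is the constant $y_{\emptyset,A}$, so that $e_{0,A}=\sigma^{2}$ by (\ref{14}) or (\ref{15}) at $S=0$. Nothing further is needed.
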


\begin{corollary}
The lower and upper bounds of the
expected errors $\mathbb{E}\left[e_{1,R}\right]$ and $\mathbb{E}\left[e_{2,R}\right]$
from the univariate and bivariate RDD approximations, respectively,
expressed in terms of the errors $e_{1,A}$ and $e_{2,A}$
from the univariate and bivariate ADD approximations, are
\begin{equation*}
4e_{1,A} \le  \mathbb{E}\left[e_{1,R}\right]  \le  \left(N^{2}-N+2\right)e_{1,A},\;2\le N<\infty,
\end{equation*}
and
\begin{equation*}
8e_{2,A} \le \mathbb{E}\left[e_{2,R}\right]  \le  {\displaystyle \frac{1}{4}}\left(N^{4}-2N^{3}-N^{2}+2N+8\right)e_{2,A},\;3\le N<\infty,
\end{equation*}
respectively.
\label{c2}
\end{corollary}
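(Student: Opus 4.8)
The plan is to obtain all four inequalities by a term-by-term comparison of the variance-component formulae already in hand, exploiting that the expected RDD errors and the corresponding ADD errors are assembled from the very same nonnegative partial sums $\sum_{|u|=s}\sigma_u^2$ and differ only by an $s$-dependent weight.

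For the univariate bound I would combine Theorem \ref{t5} with (\ref{15}) specialized to $S=1$. Writing $\tau_s:=\sum_{\emptyset\ne u\subseteq\{1,\cdots,N\},\,|u|=s}\sigma_u^2\ge 0$, these read $\mathbb{E}[e_{1,R}]=\sum_{s=2}^{N}c_1(s)\,\tau_s$ and $e_{1,A}=\sum_{s=2}^{N}\tau_s$, with $c_1(s)=s^2-s+2$. Since the increment $c_1(s+1)-c_1(s)=2s$ is positive for $s\ge 2$, the weight $c_1$ is strictly increasing on $\{2,\cdots,N\}$, so $c_1(2)=4\le c_1(s)\le c_1(N)=N^2-N+2$ throughout the range of summation. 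Multiplying each of these inequalities by $\tau_s\ge 0$ and summing over $s$ then gives $4\,e_{1,A}\le\mathbb{E}[e_{1,R}]\le(N^2-N+2)\,e_{1,A}$ for $2\le N<\infty$.

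For the bivariate bound the same scheme applies, now combining Theorem \ref{t6} with (\ref{15}) at $S=2$: $\mathbb{E}[e_{2,R}]=\sum_{s=3}^{N}c_2(s)\,\tau_s$ and $e_{2,A}=\sum_{s=3}^{N}\tau_s$, with $c_2(s)=\frac{1}{4}(s^4-2s^3-s^2+2s+8)$. The step that needs a little algebra is the monotonicity of $c_2$ on $\{3,\cdots,N\}$; the cleanest route is the factorization $s^4-2s^3-s^2+2s=(s+1)\,s\,(s-1)\,(s-2)$, so that $c_2(s)=\frac{1}{4}\bigl((s+1)s(s-1)(s-2)+8\bigr)$ is, for $s\ge 2$, one quarter of a constant plus a product of four nonnegative, nondecreasing factors, hence nondecreasing. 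Thus $c_2(3)=8\le c_2(s)\le c_2(N)=\frac{1}{4}(N^4-2N^3-N^2+2N+8)$ on the range of summation, and the identical termwise multiplication and summation yields $8\,e_{2,A}\le\mathbb{E}[e_{2,R}]\le\frac{1}{4}(N^4-2N^3-N^2+2N+8)\,e_{2,A}$ for $3\le N<\infty$.

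The only nonroutine ingredient is spotting the factorization of the quartic appearing in $c_2$; everything else is a one-line consequence of the monotonicity of the weights together with $\sigma_u^2\ge 0$, so I do not anticipate any real obstacle.
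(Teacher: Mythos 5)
Your proposal is correct and is essentially the argument the paper intends: Corollary \ref{c2} is stated without an explicit proof, but it follows from Theorems \ref{t5} and \ref{t6} together with (\ref{15}) by exactly the termwise comparison you give, bounding the $s$-dependent weight below by its value at $s=S+1$ and above by its value at $s=N$ (the same device the paper makes explicit in (\ref{58})--(\ref{59}) for Corollary \ref{c3}). Your factorization $s^4-2s^3-s^2+2s=(s+1)s(s-1)(s-2)$ and the resulting monotonicity check are both correct, and they supply the small detail the paper leaves implicit.
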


\begin{remark}
When $\mathbf{X}$ comprises independent
and identically distributed uniform random variables over $[0,1]$,
the results of the \emph{zero}-variate and univariate RDD approximations
presented in Theorems \ref{t4} and \ref{t5} coincide with those derived by Wang
\cite{wang08}. However, the results of the bivariate RDD approximation $-$
that is, Theorem \ref{t6} $-$ are new. Theorems \ref{t5} and \ref{t6} demonstrate that on average the error from the univariate RDD approximation eliminates
the variance terms associated with the univariate contribution. For
the bivariate RDD approximation, the variance portions resulting from
the univariate and bivariate terms have been removed as well. The
univariate and bivariate ADD approximations also satisfy this important
property. However, the coefficients of higher-variate terms in the
RDD errors are larger than unity, implying greater errors from RDD
approximations than from ADD approximations.
\end{remark}

\begin{remark}
From Corollary \ref{c1}, the \emph{zero}-variate
RDD approximation on average commits twice the amount of error as
does the \emph{zero}-variate ADD approximation. Since a \emph{zero}-variate
approximation, whether derived from ADD or RDD, does not capture the
random fluctuations of a stochastic response, the error analysis associated
with a \emph{zero}-variate approximation is useless. Nonetheless,
the \emph{zero}-variate results are reported here for completeness.
\end{remark}

\begin{remark}
Corollary \ref{c2} shows that the expected error
from the univariate RDD approximation is at least four times larger
than the error from the univariate ADD approximation. In contrast,
the expected error from the bivariate RDD approximation can be eight
times larger or more than the error from the bivariate ADD approximation.
Given a truncation, an ADD approximation is superior to an RDD approximation.
In addition, RDD approximations may perpetrate very large errors at
upper bounds when there exist a large number of variables and appropriate
conditions. For instance, consider a contrived example involving a
function of $100$ variables with a finite variance $\sigma^{2}>0$
and the following distribution of the variance terms: $\sum_{|u|=1}\sigma_{u}^{2}=0.999\sigma^{2}$,
$\sum_{2\le|u|\le99}\sigma_{u}^{2}=0$, and $\sum_{|u|=100}\sigma_{u}^{2}=0.001\sigma^{2}$.
Then, the errors from the univariate and bivariate ADD approximations
are both equal to $0.001\sigma^{2}$, which is negligibly small. In
contrast, the error from the univariate RDD approximation reaches
$(100^{2}-100+2)\times0.001\sigma^{2}\cong9.9\sigma^{2}$, an unacceptably
large magnitude already. Furthermore, the error from the bivariate
RDD approximation jumps to an enormously large value of $\frac{1}{4}(100^{4}-2\times100^{3}-100^{2}+2\times100+8)\times0.001\sigma^{2}\cong24,498\sigma^{2}$.
More importantly, the results reveal a theoretical possibility for
a higher-variate RDD approximation to commit a larger error than a
lower-variate RDD approximation $-$ an impossible scenario for the
ADD approximation. However, it is unlikely for this odd behavior to
be exhibited for realistic functions, where the variances of
higher-variate component functions attenuate rapidly or vanish altogether.
Nonetheless, a caution is warranted when employing RDD approximations
for stochastic analysis of high-dimensional systems.
\label{r1}
\end{remark}

\subsection{Expected Error from General Approximation}

The error analysis presented so far is limited to at most the bivariate approximation. In this subsection, the
approximation error from a general $S$-variate truncation is derived
as follows.

\begin{theorem}
Let $\mathbf{c}=(c_{1},\cdots,c_{N})\in\mathbb{R}^{N}$
be a random vector with the joint probability density function of
the form $f_{\mathbf{X}}(\mathbf{c})=\prod_{j=1}^{j=N}f_{j}(c_{j})$,
where $f_{j}$ is the marginal probability density function of its
$j$th coordinate. Then the expected error committed by the $S$-variate
RDD approximation for $0\le S<N,$ $S+1\le N<\infty$ is
\begin{equation}
\mathbb{E}\left[e_{S,R}(\mathbf{c})\right]={\displaystyle \sum_{s=S+1}^{N}\left[{\displaystyle 1+\sum_{k=0}^{S}}{\displaystyle \binom{s-S+k-1}{k}^{2}}\binom{s}{S-k}\right]\sum_{{\textstyle {\emptyset\neq u\subseteq\{1,\cdots,N\}\atop |u|=s}}}}\sigma_{u}^{2},\label{47}
\end{equation}
where $\sigma_{u}^{2}=\mathbb{E}\left[y_{u,A}^{2}(\mathbf{X}_{u})\right]$ is the variance of
the \emph{zero}-mean ADD component function $y_{u,A}$, $\emptyset\ne u\subseteq\{1,\cdots,N\}$.
\label{t7}
\end{theorem}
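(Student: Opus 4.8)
The plan is to follow the template used for Theorems \ref{t4}, \ref{t5}, and \ref{t6}. Setting a general $S$ in (\ref{29}) and expanding the square in the integrand, I would write $\mathbb{E}\left[e_{S,R}(\mathbf{c})\right]=I_{S,1}+I_{S,2}+I_{S,3}$, where $I_{S,1}:=\int_{\mathbb{R}^{2N}}y^{2}(\mathbf{x})f_{\mathbf{X}}(\mathbf{x})f_{\mathbf{X}}(\mathbf{c})d\mathbf{x}d\mathbf{c}$, $I_{S,2}:=-2\int_{\mathbb{R}^{2N}}y(\mathbf{x})\hat{y}_{S,R}(\mathbf{x};\mathbf{c})f_{\mathbf{X}}(\mathbf{x})f_{\mathbf{X}}(\mathbf{c})d\mathbf{x}d\mathbf{c}$, and $I_{S,3}:=\int_{\mathbb{R}^{2N}}\hat{y}_{S,R}^{2}(\mathbf{x};\mathbf{c})f_{\mathbf{X}}(\mathbf{x})f_{\mathbf{X}}(\mathbf{c})d\mathbf{x}d\mathbf{c}$. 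As before, $I_{S,1}=\mathbb{E}\left[y^{2}(\mathbf{X})\right]=y_{\emptyset,A}^{2}+\sum_{s=1}^{N}\sum_{|u|=s}\sigma_{u}^{2}$ is independent of $S$. For $I_{S,2}$, I would integrate $\hat{y}_{S,R}(\mathbf{x};\mathbf{c})$ against $f_{\mathbf{X}}(\mathbf{c})d\mathbf{c}$ first and invoke the ADD--RDD relation (\ref{30c}) to obtain $I_{S,2}=-2\mathbb{E}\left[y(\mathbf{X})\hat{y}_{S,A}(\mathbf{X})\right]$; since $y-\hat{y}_{S,A}$ carries only terms of dimension exceeding $S$ while $\hat{y}_{S,A}$ carries terms of dimension at most $S$, the orthogonality argument of Proposition \ref{p4} together with the variance split (\ref{12}) gives $I_{S,2}=-2y_{\emptyset,A}^{2}-2\sum_{s=1}^{S}\sum_{|u|=s}\sigma_{u}^{2}$.

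The substantive work is $I_{S,3}$, which I would attack through the direct form (\ref{18}). Squaring it produces a double sum over $k,k'\in\{0,\dots,S\}$ and over subsets $u,v$ with $|u|=S-k$ and $|v|=S-k'$, with prefactors $(-1)^{k+k'}\binom{N-S+k-1}{k}\binom{N-S+k'-1}{k'}$, of the ``master integral'' $J(u,v):=\int_{\mathbb{R}^{2N}}y(\mathbf{x}_{u},\mathbf{c}_{-u})y(\mathbf{x}_{v},\mathbf{c}_{-v})f_{\mathbf{X}}(\mathbf{x})f_{\mathbf{X}}(\mathbf{c})d\mathbf{x}d\mathbf{c}$. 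The key claim is that $J(u,v)=\sum_{w\subseteq\overline{u\triangle v}}\sigma_{w}^{2}$, where $u\triangle v$ is the symmetric difference and $\sigma_{\emptyset}^{2}:=y_{\emptyset,A}^{2}$. To prove it, split $\{1,\dots,N\}$ according to membership in $u$ and $v$: the coordinates in $\overline{u\cup v}$ contribute a trivial $\int f_{i}(x_{i})dx_{i}=1$, those in $u\cap v$ a trivial $\int f_{i}(c_{i})dc_{i}=1$, the two factors of the integrand share their arguments precisely on the coordinates indexed by $A:=(u\cap v)\cup\overline{u\cup v}=\overline{u\triangle v}$, and on the coordinates in $u\triangle v$ the two factors depend on disjoint sets of integration variables. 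Hence $J(u,v)$ factors as $\int g(\mathbf{z}_{A})^{2}f_{A}(\mathbf{z}_{A})d\mathbf{z}_{A}$ where $g(\mathbf{z}_{A}):=\int_{\mathbb{R}^{N-|A|}}y(\mathbf{z}_{A},\mathbf{x}_{-A})f_{-A}(\mathbf{x}_{-A})d\mathbf{x}_{-A}=\sum_{w\subseteq A}y_{w,A}(\mathbf{z}_{w})$ by the ADD recursion (\ref{4}); Propositions \ref{p1} and \ref{p2} then yield $J(u,v)=\mathbb{E}[g^{2}(\mathbf{X}_{A})]=\sum_{w\subseteq A}\sigma_{w}^{2}$. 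This is Sobol's formula (\ref{31}) applied to general index sets.

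With the master integral available, I would collect the coefficient of $\sum_{|u|=s}\sigma_{u}^{2}$ in $I_{S,3}$; by symmetry it depends only on $s$, so fix an $s$-set $w$. Since $w\subseteq\overline{u\triangle v}$ is equivalent to $u\cap w=v\cap w$, classifying admissible pairs by $t:=|u\cap w|$ shows that the number of $(u,v)$ with $|u|=S-k$, $|v|=S-k'$ equals $\binom{s}{t}\binom{N-s}{S-k-t}\binom{N-s}{S-k'-t}$, which factorizes over $k$ and $k'$. Thus the coefficient is $\sum_{t}\binom{s}{t}\bigl[\sum_{k=0}^{S}(-1)^{k}\binom{N-S+k-1}{k}\binom{N-s}{S-t-k}\bigr]^{2}$. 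Rewriting $(-1)^{k}\binom{N-S+k-1}{k}=\binom{S-N}{k}$ and applying Vandermonde's convolution collapses the inner sum to $\binom{(S-N)+(N-s)}{S-t}=\binom{S-s}{S-t}$, and the same negative-index identity gives $\binom{S-s}{S-t}^{2}=\binom{s-t-1}{S-t}^{2}$. Re-indexing $k=S-t$ turns $\sum_{t}\binom{s}{t}\binom{S-s}{S-t}^{2}$ into $\sum_{k=0}^{S}\binom{s-S+k-1}{k}^{2}\binom{s}{S-k}$, so $I_{S,3}=\sum_{s=0}^{N}\bigl[\sum_{k=0}^{S}\binom{s-S+k-1}{k}^{2}\binom{s}{S-k}\bigr]\sum_{|u|=s}\sigma_{u}^{2}$; a one-line check (only the $k=S-s$ term survives) shows this bracket equals $1$ whenever $0\le s\le S$. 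Adding $I_{S,1}$, $I_{S,2}$, and $I_{S,3}$, the contributions with $0\le s\le S$ cancel because $1-2+1=0$, and for $s\ge S+1$ the coefficient is $1+\sum_{k=0}^{S}\binom{s-S+k-1}{k}^{2}\binom{s}{S-k}$, which is exactly (\ref{47}).

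The step I expect to be the main obstacle is the evaluation of $I_{S,3}$, not the final assembly: one must establish the master-integral identity cleanly for \emph{arbitrary} $u,v$ (the bivariate proof of Theorem \ref{t6} needed only a handful of specific index sets), and then carry out the binomial bookkeeping so that the explicit $N$-dependence in both the prefactors of (\ref{18}) and the pair counts disappears. The negative-binomial identity $(-1)^{k}\binom{N-S+k-1}{k}=\binom{S-N}{k}$ and Vandermonde's convolution are precisely what force that cancellation; once they are in place, the remaining manipulation is routine.
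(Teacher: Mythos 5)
Your proposal is correct, and on the one step that actually requires work it takes a genuinely different---and more rigorous---route than the paper. The skeleton is identical: both arguments split $\mathbb{E}\left[e_{S,R}(\mathbf{c})\right]$ into $I_{S,1}+I_{S,2}+I_{S,3}$ and evaluate the first two integrals exactly as you do, via (\ref{30c}) and the orthogonality of $y-\hat{y}_{S,A}$ and $\hat{y}_{S,A}$. The divergence is in $I_{S,3}$. The paper computes only the top coefficient $b_{S}(N)$ directly from the expansion of the square and then obtains $b_{S}(s)$ for $s<N$ by an interpolation heuristic: it posits $b_{S}(s)=f(N)-g(N,s)$, ``switches'' $N$ and $s$, deduces $g(N,N)=g(s,s)=0$, and concludes that $b_{S}(s)$ follows from $b_{S}(N)$ by substituting $s$ for $N$. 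That argument presupposes the functional form and verifies it only at one point; it is a plausible ansatz rather than a derivation. You instead prove the master-integral identity $J(u,v)=\sum_{w\subseteq\overline{u\triangle v}}\sigma_{w}^{2}$ for arbitrary $u,v$---the natural generalization of Sobol's formula (\ref{31}), obtained exactly as you describe by separating shared from unshared coordinates, noting that both factors marginalize to the same function $g(\mathbf{z}_{A})=\sum_{w\subseteq A}y_{w,A}(\mathbf{z}_{w})$, and invoking Propositions \ref{p1} and \ref{p2}---and then extract $b_{S}(s)$ for \emph{every} $s$ from the pair count $\binom{s}{t}\binom{N-s}{S-k-t}\binom{N-s}{S-k'-t}$, the rewriting $(-1)^{k}\binom{N-S+k-1}{k}=\binom{S-N}{k}$, and Vandermonde's convolution (which applies despite the negative upper index because $\binom{N-s}{S-t-k}$ truncates the sum). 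I checked the endpoints: your formula reproduces the paper's $b_{S}(N)$ in (\ref{53}), specializes to $s^{2}-s+1$ and $\tfrac{1}{4}(s^{4}-2s^{3}-s^{2}+2s+4)$ for $S=1,2$, and your observation that only the $k=S-s$ term survives for $s\le S$ correctly yields $b_{S}(s)=1$ there, so the cancellation $1-2+1=0$ for $s\le S$ goes through. What your approach buys is an actual proof of the general coefficient formula (\ref{56}), where the paper offers a guess checked at $s=N$; what the paper's route buys is brevity. If you write this up, the only things to spell out are the Fubini factorization behind the master integral and the validity of the generalized Vandermonde identity---both routine.
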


\begin{proof}
Expanding the square in (\ref{29}), the
expected error from the $S$-variate RDD approximation is
\begin{equation}
\mathbb{E}\left[e_{S,R}(\mathbf{c})\right]=I_{S,1}+I_{S,2}+I_{S,3},\label{48}
\end{equation}
where
\begin{equation}
\begin{split}
I_{S,1} & := \int_{\mathbb{R}^{2N}}y^{2}(\mathbf{x})f_{\mathbf{X}}(\mathbf{x})f_{\mathbf{X}}(\mathbf{c})d\mathbf{x}d\mathbf{c},\\
I_{S,2} & := -2\int_{\mathbb{R}^{2N}}y(\mathbf{x})\hat{y}_{S,R}(\mathbf{x};\mathbf{c})f_{\mathbf{X}}(\mathbf{x})f_{\mathbf{X}}(\mathbf{c})d\mathbf{x}d\mathbf{c},\\
I_{S,3} & := \int_{\mathbb{R}^{2N}}\hat{y}_{S,R}^{2}(\mathbf{x};\mathbf{c})f_{\mathbf{X}}(\mathbf{x})f_{\mathbf{X}}(\mathbf{c})d\mathbf{x}d\mathbf{c}
\end{split}\label{49}
\end{equation}
are three generic $2N$-dimensional integrals. The first integral
\begin{equation}
I_{S,1}=y_{\emptyset,A}^{2}+\sigma^{2}=y_{\emptyset,A}^{2}+\sum_{s=1}^{N}\:\sum_{{\textstyle {\emptyset\ne u\subseteq\{1,\cdots,N\}\atop |u|=s}}}\sigma_{u}^{2}\label{50}
\end{equation}
is the same as before. The second integral
\begin{equation}
\begin{split}
I_{S,2} & =  -2\int_{\mathbb{R}^{N}}y(\mathbf{x})\hat{y}_{S,A}(\mathbf{x})f_{\mathbf{X}}(\mathbf{x})d\mathbf{x}\\
 & =  -2\int_{\mathbb{R}^{N}}\hat{y}_{S,A}^{2}(\mathbf{x})f_{\mathbf{X}}(\mathbf{x})d\mathbf{x}\\
 & =  -2\mathbb{E}\left[\hat{y}_{S,A}^{2}(\mathbf{X})\right]=-2\left(y_{\emptyset,A}^{2}+\hat{\sigma}_{S,A}^{2}\right)\\
 & =  -2y_{\emptyset,A}^{2}-2{\displaystyle \sum_{s=1}^{S}\:\sum_{{\textstyle {\emptyset\ne u\subseteq\{1,\cdots,N\}\atop |u|=s}}}}\sigma_{u}^{2}
\end{split}\label{51}
\end{equation}
comprises variance terms associated with at most $S$ variables. The
third integral, using $\hat{y}_{S,R}(\mathbf{x};\mathbf{c})$
from (\ref{18}), takes the form
\begin{equation}
\begin{split}
I_{S,3} & = \int_{\mathbb{R}^{2N}}\biggl[{\displaystyle \sum_{k=0}^{S}}(-1)^{k}{\displaystyle \binom{N-S+k-1}{k}}{\displaystyle \sum_{{\textstyle {u\subseteq\{1,\cdots,N\}\atop |u|=S-k}}}}y(\mathbf{x}_{u},\mathbf{c}_{-u})\biggr]^{2}f_{\mathbf{X}}(\mathbf{x})f_{\mathbf{X}}(\mathbf{c})d\mathbf{x}d\mathbf{c},\\
 & =  b_{S}(0)y_{\emptyset,A}^{2}+{\displaystyle \sum_{s=1}^{N}b_{S}(s)\sum_{{\textstyle {\emptyset\ne u\subseteq\{1,\cdots,N\}\atop |u|=s}}}}\sigma_{u}^{2}
\end{split}\label{52}
\end{equation}
with the generic $s$-variate coefficient $b_{S}(s)$, $s=0,1,\cdots,N,$
for the $S$-variate approximation yet to be determined. Of $N+1$
such coefficients, the last one,
\begin{equation}
b_{S}(N)={\displaystyle \sum_{k=0}^{S}}\binom{N-S+k-1}{k}^{2}\binom{N}{S-k},\label{53}
\end{equation}
is easier to determine. It is obtained from the expansion coefficients of the square in (\ref{52}) that are associated with all variance terms
of $\sigma^{2}$. To determine other coefficients, $b_{S}(s)$, $s=0,1,\cdots,N-1$,
the procedure used before for the univariate or bivariate approximation is
unwieldy. An alternative scheme proposed here stems from the realization
that the expressions of those coefficients consist of terms from two
sources: (1) terms that depend solely on $N$, which can be described
by a function $f$, say, of $N$; and (2) terms that depend on both
$N$ and $s$, which can be described by another function $g$, say,
of $N$ and $s$. Following this rationale, let a generic coefficient be expressed by
\begin{equation}
b_{S}(s)=f(N)-g(N,s)\label{54}
\end{equation}
for any $0\le N<\infty$ and $0\le s\le N.$ Switching
the variables $N$ and $s$, (\ref{54}) produces
\begin{equation}
b_{S}(N)=f(s)-g(s,N).\label{55}
\end{equation}
At $s=N$, (\ref{54}) and (\ref{55}) result in $g(s,s)=g(N,N)=0$.
Either of these two equations at $s=N$ with $g(s,s)=g(N,N)=0$ in
mind yields $f(N)=b_{S}(N)$ or $f(s)=b_{S}(s)$, where the function
$b_{S}$ is already described in (\ref{53}). Therefore, the generic $s$-variate coefficient
\begin{equation}
b_{S}(s)={\displaystyle \sum_{k=0}^{S}}\binom{s-S+k-1}{k}^{2}\binom{s}{S-k},\;0\le s\le N,\label{56}
\end{equation}
where the binomial coefficients should be interpreted more generally
than their classical combinatorial definition, for instance,
\begin{equation}
 \binom{r}{k}:=
  \begin{cases}
{\displaystyle \frac{1}{k!}}r^{-\underline{k}}={\displaystyle \frac{1}{k!}}r(r-1)\cdots(r-k+1) & \text{if } k > 0, \\
   1       & \text{if } k = 0, \\
   0       & \text{if } k < 0,
  \end{cases}
\end{equation}
valid for any real number $r\in\mathbb{R}$ and any integer
$k\in\mathbb{Z}$. Adding all terms in (\ref{50}),
(\ref{51}), and (\ref{52}), with the cognizance that $b_{S}(s)=1$ for
all $0\le s\le S$, yields (\ref{47}), proving the theorem.
\end{proof}

\begin{corollary}
The lower and upper bounds of the
expected error $\mathbb{E}\left[e_{S,R}\right]$ from the $S$-variate
RDD approximation, expressed in terms of the error $e_{S,A}$ from
the $S$-variate ADD approximations, are
\begin{equation}
2^{S+1}e_{S,A}\le\mathbb{E}\left[e_{S,R}\right]\le\left[{\displaystyle 1+\sum_{k=0}^{S}}\binom{N-S+k-1}{k}^{2}\binom{N}{S-k}\right]e_{S,A},\label{57}
\end{equation}
$0\le S<N<\infty$, where the coefficients of the lower and upper bounds are obtained
from
\begin{equation}
1+b_{S}(S+1)={\displaystyle 1+\sum_{k=0}^{S}}\binom{S+1}{S-k}={\displaystyle \sum_{k=0}^{S+1}}\binom{S+1}{k}=2^{S+1}\label{58}
\end{equation}
and
\begin{equation}
1+b_{S}(N)={\displaystyle 1+\sum_{k=0}^{S}}\binom{N-S+k-1}{k}^{2}\binom{N}{S-k},\label{59}
\end{equation}
respectively.
\label{c3}
\end{corollary}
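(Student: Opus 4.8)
The plan is to compare, coefficient by coefficient, the expression for $\mathbb{E}\left[e_{S,R}\right]$ furnished by Theorem~\ref{t7} with the expression for $e_{S,A}$ in~(\ref{15}). Abbreviating the bracketed coefficient in~(\ref{47}) as $c_S(s):=1+b_S(s)$, where $b_S(s)$ is the generic coefficient~(\ref{56}), Theorem~\ref{t7} reads $\mathbb{E}\left[e_{S,R}\right]=\sum_{s=S+1}^{N}c_S(s)\sum_{|u|=s}\sigma_u^2$, while $e_{S,A}=\sum_{s=S+1}^{N}\sum_{|u|=s}\sigma_u^2$ by~(\ref{15}), the sums running over $\emptyset\ne u\subseteq\{1,\dots,N\}$ with $|u|=s$. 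Because every inner sum $\sum_{|u|=s}\sigma_u^2$ is nonnegative, the two-sided estimate~(\ref{57}) follows at once once I show that
\begin{equation*}
2^{S+1}\le c_S(s)\le c_S(N)\quad\text{for all }s\text{ with }S+1\le s\le N,
\end{equation*}
and then identify the extreme coefficients $c_S(S+1)$ and $c_S(N)$ through~(\ref{58}) and~(\ref{59}).

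The crux is the monotonicity of $s\mapsto b_S(s)$ on the index set $\{S+1,\dots,N\}$. First I would check that, throughout this range, every binomial coefficient in~(\ref{56}) coincides with its classical combinatorial value: for $k=0$ one has $\binom{s-S-1}{0}=1$; for $1\le k\le S$ and $s\ge S+1$ one has $s-S+k-1\ge k\ge 1$, so $\binom{s-S+k-1}{k}$ is an ordinary binomial coefficient $\ge1$; and since $0\le S-k\le S<s$, the factor $\binom{s}{S-k}$ is classical as well. Each of these factors is nonnegative and nondecreasing in $s$ for fixed $k$, hence so is the squared product $\binom{s-S+k-1}{k}^2\binom{s}{S-k}$, and therefore so is the finite sum $b_S(s)$. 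Consequently $b_S$, and with it $c_S$, attains its minimum over $\{S+1,\dots,N\}$ at $s=S+1$ and its maximum at $s=N$, which delivers the desired sandwich.

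It remains to evaluate the endpoints. The maximal coefficient $c_S(N)=1+b_S(N)$ is, by the definition of $b_S$ in~(\ref{56}) with $s=N$, precisely the quantity displayed in~(\ref{59}), giving the upper bound in~(\ref{57}). For the minimum, putting $s=S+1$ in~(\ref{56}) makes the first factor trivial, $\binom{(S+1)-S+k-1}{k}=\binom{k}{k}=1$, so $b_S(S+1)=\sum_{k=0}^{S}\binom{S+1}{S-k}$; re-indexing by $j=S-k$ and adjoining the term $\binom{S+1}{S+1}=1$ converts this into a full binomial sum,
\begin{equation*}
c_S(S+1)=1+\sum_{k=0}^{S}\binom{S+1}{S-k}=\sum_{j=0}^{S+1}\binom{S+1}{j}=2^{S+1},
\end{equation*}
which is exactly~(\ref{58}). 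Multiplying the sandwich by $\sum_{|u|=s}\sigma_u^2\ge0$ and summing over $s$ then produces~(\ref{57}). The one step warranting genuine care is the verification that the generalized binomial coefficients in~(\ref{56}) reduce to ordinary ones on the whole summation range $S+1\le s\le N$; once that is secured, the monotonicity, and hence the corollary, are immediate.
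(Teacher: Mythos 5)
Your proposal is correct and follows essentially the same route as the paper: both compare the coefficient $1+b_{S}(s)$ multiplying $\sum_{|u|=s}\sigma_{u}^{2}$ in Theorem~\ref{t7} against the unit coefficients in $e_{S,A}$ from (\ref{15}), and identify the extreme values by evaluating $1+b_{S}(s)$ at $s=S+1$ (giving $2^{S+1}$ via the binomial theorem) and at $s=N$. The only addition is that you explicitly verify the monotonicity of $b_{S}(s)$ on $S+1\le s\le N$ (after checking that the generalized binomial coefficients reduce to classical, nonnegative, nondecreasing ones there), a step the paper leaves implicit; this is a worthwhile tightening but not a different argument.
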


\begin{remark}
Both Theorem \ref{t7} and Corollary \ref{c3} are new and
provide a general result pertaining to RDD error analysis for an arbitrary
truncation. The specific results of the \emph{zero}-variate or univariate
or bivariate RDD approximation, derived in the preceding subsection,
can be recovered by setting $S=0$ or $1$ or $2$ in (\ref{47})
through (\ref{59}). From Corollary \ref{c3}, the expected error from the
$S$-variate RDD approximation of a multivariate function is at least
$2^{S+1}$ times larger than the error from the $S$-variate ADD approximation.
In other words, the ratio of RDD to ADD errors doubles for each increment
of the truncation. Consequently, ADD approximations are exceedingly
more precise than RDD approximations at higher-variate truncations.
\end{remark}

Although the relative disadvantage of using RDD over ADD worsens drastically with the truncation $S$, one hopes that the approximation error is also decreasing with
increasing $S$.  For instance, given a rate at which $\sigma_{u}^2$ decreases with $|u|$, what can be inferred on how fast $e_{S,A}$ and
$\mathbb{E}\left[e_{S,R}\right]$ decay with respect to $S$?  Corollary \ref{c4a} and subsequent discussions provide some insights.

\begin{corollary}
If the variance of the \emph{zero}-mean ADD component function $y_{u,A}$ diminishes according to $\sigma_{u}^2 \le Cp^{-|u|}$, where $\emptyset\ne u\subseteq\{1,\cdots,N\}$,
and $C>0$ and $p>1$ are two real-valued constants, then

\begin{equation}
e_{S,A} \le C{\displaystyle \sum_{s=S+1}^{N}{\displaystyle \binom{N}{s}}p^{-s}}\label{59c}
\end{equation}
and
\begin{equation}
\mathbb{E}\left[e_{S,R}\right] \le C{\displaystyle \sum_{s=S+1}^{N}\left[{\displaystyle 1+\sum_{k=0}^{S}}{\displaystyle \binom{s-S+k-1}{k}^{2}}\binom{s}{S-k}\right]
{\displaystyle \binom{N}{s}}p^{-s}}\label{59d}
\end{equation}
for $0\le S<N<\infty$.
\label{c4a}
\end{corollary}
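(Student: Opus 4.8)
The plan is to derive both inequalities directly from the exact error formulae already established in the excerpt: equation (\ref{15}) for $e_{S,A}$ and Theorem \ref{t7} for $\mathbb{E}\left[e_{S,R}\right]$. Each of these expresses the error as a weighted sum, over $s=S+1,\dots,N$, of the level-$s$ cumulative variance $\sum_{|u|=s}\sigma_u^2$, and in both cases the weights are nonnegative. The only new ingredient is to replace each $\sigma_u^2$ by its hypothesized bound $Cp^{-|u|}$ and to count how many subsets $u$ contribute at each cardinality level $s$.

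First I would fix an integer $s$ with $1\le s\le N$ and note that there are exactly $\binom{N}{s}$ subsets $u\subseteq\{1,\dots,N\}$ with $|u|=s$, each of which, being nonempty, satisfies $\sigma_u^2\le Cp^{-s}$ by hypothesis. Summing over these subsets gives $\sum_{|u|=s}\sigma_u^2\le C\binom{N}{s}p^{-s}$. Substituting this into (\ref{15}) immediately yields (\ref{59c}). For the RDD error I would insert the same bound into the formula of Theorem \ref{t7}; since the bracketed coefficient $1+\sum_{k=0}^{S}\binom{s-S+k-1}{k}^2\binom{s}{S-k}$ multiplying $\sum_{|u|=s}\sigma_u^2$ is nonnegative for every $s$ in the summation range, the term-by-term replacement is valid and produces exactly (\ref{59d}). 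Since $N<\infty$, both errors are finite sums, so no convergence issue arises.

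The one point requiring a moment's care is verifying that the weights appearing in Theorem \ref{t7} are indeed nonnegative, so that multiplying the inequality $\sum_{|u|=s}\sigma_u^2\le C\binom{N}{s}p^{-s}$ through by them preserves the direction. For $s\ge S+1$ and $0\le k\le S$ one has $s-S+k-1\ge k\ge 0$, so $\binom{s-S+k-1}{k}$ reduces to an ordinary nonnegative binomial coefficient; likewise $0\le S-k\le S<s$, so $\binom{s}{S-k}$ is an ordinary binomial coefficient as well. Hence each bracketed weight is actually $\ge 1$, and the term-by-term bounding is legitimate. This is the only "obstacle," and it is a minor one; the remainder of the argument is pure bookkeeping, combining the two exact formulae with the decay hypothesis and the subset count.
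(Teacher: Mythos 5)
Your proposal is correct and follows exactly the route the paper intends: substitute the decay bound $\sigma_u^2\le Cp^{-s}$ into the exact error formulae (\ref{15}) and (\ref{47}), using the count of $\binom{N}{s}$ subsets of cardinality $s$ and the nonnegativity of the weights. The paper gives no explicit proof of Corollary \ref{c4a}, but this term-by-term substitution is evidently the intended argument, and your check that the coefficients in Theorem \ref{t7} reduce to ordinary (hence nonnegative) binomial coefficients for $s\ge S+1$ is a sound, if minor, addition.
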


When the equality holds in (\ref{59c}), $e_{S,A}$ decays strictly monotonically with respect to $S$ for any rate parameter $p$.  In contrast, $\mathbb{E}\left[e_{S,R}\right]$, according to (\ref{59d}), does not follow suit for an arbitrary $p$.  However, there exists a minimum threshold, say, $p_{\min}$, when crossed, $\mathbb{E}\left[e_{S,R}\right]$ also decays monotonically.  The threshold can be determined from the condition that  $\mathbb{E}\left[e_{0,R}\right] = \mathbb{E}\left[e_{1,R}\right]$, resulting in the relationship

\begin{equation}
{\displaystyle\frac{2}{p_{\min}}} = {\displaystyle \frac{(N-1)\left(1+\dfrac{1}{p_{\min}}\right)^N}{(1+p_{\min})^2} }\label{59e}
\end{equation}

\noindent between $N$ and $p_{\min}$.  Equation (\ref{59e}) supports an exact solution of

\begin{equation}
N = 1 + \frac{1}{ \ln \left(1+\dfrac{1}{p_{\min}}\right) } W\left( 2(1+p_{\min}) \ln \left(1+\dfrac{1}{p_{\min}}\right) \right)
\label{59f}
\end{equation}

\noindent in terms of $p_{\min}$, expressed employing the Lambert W function $W$ and can be inverted easily.  For instance, when $N=20$, (\ref{59f}) yields $p_{\min}=21.5187$, the only real-valued solution of interest. Depicted in Figure \ref{f1} (left), $p_{\min}$ derived from (\ref{59f}) increases monotonically and strikingly close to linearly with $N$ for the ranges of the variables examined.

Using the equalities in (\ref{59c}) and (\ref{59d}), Figure \ref{f1} (right) presents plots of two normalized errors, $\mathbb{E}\left[e_{S,R}\right]/\sigma^2$ and $e_{S,A}/\sigma^2$, against $S$, each obtained for $N=20$ and $p=5$ or $50$, where the variance $\sigma^2 = C [ ( 1+ \tfrac{1}{p} )^N - 1 ]$.  When the rate parameter is sufficiently low (\emph{e.g.}, $p=5< p_{\min}$), the expected RDD error initially rises before falling as $S$ becomes larger.  The non-monotonic behavior of the RDD error is undesirable, but it vanishes when the rate parameter is sufficiently high (\emph{e.g.}, $p=50>p_{\min}$).  No such anomaly is found in the ADD error for any $p$.

\begin{figure}[h]
\begin{centering}
\includegraphics[scale=0.62]{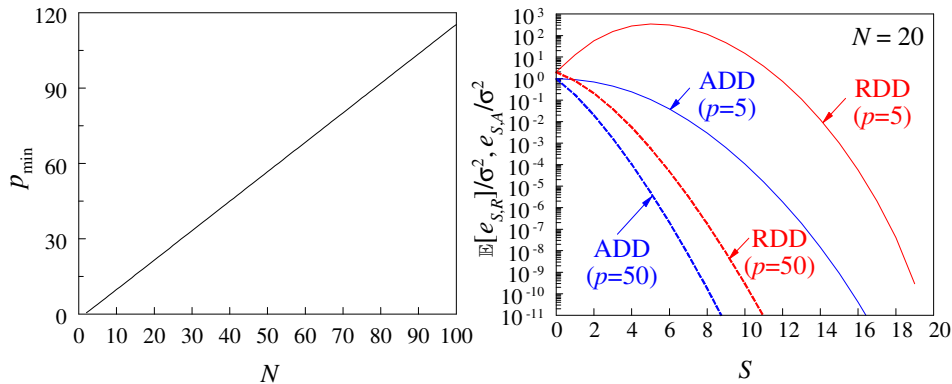}
\par\end{centering}
\caption{Relationship between $p_{\min}$ and $N$ (left) and normalized RDD and ADD errors versus $S$ for $N=20$, $p=5$ or $50$ (right).}
\label{f1}
\end{figure}

\begin{corollary}
The expected error $\mathbb{E}\left[e_{N-1,R}\right]$
from the best RDD approximation, expressed in terms of the error $e_{N-1,A}$
from the best ADD approximation, where the best approximations are
obtained by setting $S=N-1$, is
\begin{equation}
\mathbb{E}\left[e_{N-1,R}\right]=2^{N}e_{N-1,A},\;1\le N<\infty.\label{60}
\end{equation}
\label{c4}
\end{corollary}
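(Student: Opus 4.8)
The plan is to deduce the identity as the degenerate case $S=N-1$ of the bounds already obtained in Corollary \ref{c3}, and then observe that in this extreme case the lower and upper bounds coincide. Indeed, for $0\le S<N<\infty$ Corollary \ref{c3} gives $2^{S+1}e_{S,A}\le\mathbb{E}\left[e_{S,R}\right]\le\left[1+b_{S}(N)\right]e_{S,A}$ with $b_{S}(N)=\sum_{k=0}^{S}\binom{N-S+k-1}{k}^{2}\binom{N}{S-k}$. Substituting $S=N-1$, the lower-bound coefficient is simply $2^{S+1}=2^{N}$. For the upper-bound coefficient I would note that $N-S+k-1=k$ throughout the summation, so each factor $\binom{N-S+k-1}{k}=\binom{k}{k}=1$, and therefore
\[
1+b_{N-1}(N)=1+\sum_{k=0}^{N-1}\binom{N}{N-1-k}=1+\sum_{m=0}^{N-1}\binom{N}{m}=1+\left(2^{N}-1\right)=2^{N},
\]
using the elementary identity $\sum_{m=0}^{N}\binom{N}{m}=2^{N}$. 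Since the two bounding coefficients agree, the inequality chain forces $\mathbb{E}\left[e_{N-1,R}\right]=2^{N}e_{N-1,A}$.

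An equivalent and slightly more transparent route reads the identity directly off Theorem \ref{t7}. With $S=N-1$, the outer sum $\sum_{s=S+1}^{N}$ in (\ref{47}) collapses to the single term $s=N$, giving $\mathbb{E}\left[e_{N-1,R}\right]=\left[1+b_{N-1}(N)\right]\sum_{|u|=N}\sigma_{u}^{2}$, while the ADD error formula (\ref{15}) with $S=N-1$ reduces likewise to $e_{N-1,A}=\sum_{|u|=N}\sigma_{u}^{2}$. The common $N$-variate variance sum cancels, leaving the constant $1+b_{N-1}(N)=2^{N}$ computed above. The validity range $1\le N<\infty$ also covers $N=1$, where $S=0$, Theorem \ref{t4} gives $\mathbb{E}\left[e_{0,R}\right]=2\sigma^{2}$, and $e_{0,A}=\sigma^{2}$, consistent with $b_{0}(1)=\binom{0}{0}^{2}\binom{1}{0}=1$.

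There is no genuine obstacle here: once Theorem \ref{t7} (equivalently Corollary \ref{c3}) is available, the statement is pure bookkeeping. The one point worth making explicit is that the generalized binomial convention $\binom{k}{k}=1$ is what causes the weight coefficient $1+b_{N-1}(N)$ to telescope to a clean power of two, so that the best RDD and best ADD errors differ by exactly the factor $2^{N}$ rather than merely being bounded by it.
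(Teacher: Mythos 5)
Your proposal is correct and follows exactly the route the paper intends: Corollary \ref{c4} is the specialization $S=N-1$ of Theorem \ref{t7} (equivalently, the case where the lower and upper bounds of Corollary \ref{c3} coincide), with the sum in (\ref{47}) collapsing to the single term $s=N$ and the coefficient $1+b_{N-1}(N)=2^{N}$ evaluated via the same binomial identity the paper uses in (\ref{58}). The computation and the $N=1$ consistency check are both accurate, so there is nothing to add.
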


\begin{remark}
Due to the factor $2^{N}$ in (\ref{60}), the expected error from the best RDD approximation as $N\to\infty$ can be significantly large unless the best ADD approximation commits an error equal to or smaller than $2^{-N}$.  In reference to Corollary \ref{c4a}, suppose that $\sigma_{u}^2 \le Cp^{-|u|}$.  Then $\mathbb{E}\left[e_{N-1,R}\right] \le C(2/p)^N$.  Therefore, $\mathbb{E}\left[e_{N-1,R}\right] \to 0$ as $N\to\infty$ for $p>2$.
\end{remark}

The error analysis presented in this paper pertains to only second-moment characteristics of $y(\mathbf{X})$.
Similar analyses or definitions aimed at higher-order moments or probability
distribution of $y$ can be envisioned, but no closed-form solutions
and simple expressions are possible. However, if $y$ satisfies the
requirements of the Chebyshev inequality or its descendants $-$ a
condition fulfilled by many realistic functions $-$ then
the results and findings from this work can be effectively exploited for stochastic analysis. See the longer version of the paper for further details.

\section{Conclusions}

Two variants of dimensional decomposition, namely, RDD and ADD, of
a multivariate function, both representing finite sums of lower-dimensional
component functions, were studied. The approximations resulting from
the truncated RDD and ADD are explicated, including clarifications
of parallel developments and synonyms used by various researchers.
For the RDD approximation, a direct form, previously developed by
the author's group, was found to provide a vital link to subsequent
error analysis. New theorems were proven about the expected errors
from the bivariate and general RDD approximations, so far available
only for the univariate RDD approximation,
when the reference point is selected randomly. They furnish new formulae
for the lower and upper bounds of the expected error committed by
an arbitrarily truncated RDD, providing a means to grade RDD against
ADD approximations. The formulae indicate that the expected error
from the $S$-variate RDD approximation of a function of $N$ variables,
where $0\le S<N<\infty$, is at least $2^{S+1}$ times larger than
the error from the $S$-variate ADD approximation. Consequently, ADD
approximations are exceedingly more precise than RDD approximations
at higher-variate truncations. The analysis also finds the RDD approximation
to be sub-optimal for an arbitrarily selected reference point, whereas
the ADD approximation always results in minimum error. Therefore,
RDD approximations should be used with caveat.

\bibliographystyle{amsplain}

\end{document}